\documentclass[11pt]{amsart}
\usepackage{amssymb}
\usepackage{graphicx} 
\usepackage{enumerate}
\usepackage{mathtools}
\usepackage{color,soul}
\usepackage{cite}
\usepackage{tikz-cd}
\usetikzlibrary{matrix}

 \usepackage{url}
\usepackage{wrapfig}

\usepackage{enumitem}
\newtheorem*{theorem*}{Theorem}

\usepackage{amsmath, amsthm}
 \usepackage{relsize}

\usepackage{color}

\definecolor{lgray}{gray}{0.82}
\definecolor{rredd}{rgb}{120,0,0}

\usepackage[margin=1.2in]{geometry}

\newtheorem{thm}{Theorem}[section]
\newtheorem{prop}[thm]{Proposition}
\newtheorem{rem}[thm]{Remark}
\newtheorem{lem}[thm]{Lemma}
\newtheorem{cor}[thm]{Corollary}
\newtheorem{defn}[thm]{Definition}

\newtheorem{ex}[thm]{Example}

\newcommand{\theoremname}{Theorem:}

\newcommand{\Ok}{\mathcal{O}_K}
\newcommand{\Q}{\mathbb{Q}}

\newcommand{\C}{\mathbb{C}}

\theoremstyle{definition}

\newtheorem{example}[thm]{Example}

\theoremstyle{remark}

\newtheorem{remark}[thm]{Remark}

\numberwithin{equation}{section}

\title{Discrete real specializations of sesquilinear representations of the Braid groups}
\author{Nancy C. Scherich}
\begin{document}
\maketitle

\begin{abstract}
This paper gives a process for finding discrete real specializations of sesquilinear representations of the braid groups using Salem numbers. This method is applied to the Jones and BMW representations, and some details on the commensurability of the target groups are given.\end{abstract}

\section{Introduction}

Representations of the braid groups have attracted attention because of their wide variety of applications from discrete geometry to  quantum computing.   This paper takes the point of view that one should ask structural questions about the image of a braid group representation, in particular whether the image is discrete for specializations of the parameter.  Venkataramana in \cite{VEN} also followed this pursuit for discrete specializations of the Burau representation but with a different approach toward arithmeticity.

Since the Jones representations are used in modeling quantum computations, much work has been done to understand specializations at roots of unity, as explored by Funar and Kohno in \cite{FK2014}, Freedman, Larson and Wang in \cite{FLW}, and many others. However, there seems to be a lack of exploration of the $real$  specializations of these representations. This paper takes a more general approach to find discrete real specializations of any sesquilinear group representation, and shows how this can be applied to representations of the braid groups. The main theorem follows.

\begin{thm}\label{thmdiscrep}
Let $\rho_t:G\rightarrow GL_m(\mathbb{Z}[t,t^{-1}])$ be a group representation with a parameter $t$. Suppose there exists a matrix $J_t$ so that: 
\begin{enumerate}
\item for all $M$ in the image of $\rho_t$, $M^*J_tM=J_t$, where by definition  $M^*(t)=M^\intercal (\frac{1}{t})$,
\item $J_t=(J_{\frac{1}{t}})^\intercal $,
\item $J_t$ is positive definite for $t$ in a neighborhood $\eta$ of 1.
\end{enumerate}
Then, there exists infinitely many Salem numbers $s$, so that the specialization representation $\rho_s$ at $t=s$ is discrete.
\end{thm}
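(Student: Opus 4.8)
The plan is to specialize at a Salem number $s$ and exploit the arithmetic of the number field $K=\Q(s)$ together with the sesquilinear structure encoded by $J_t$. First I would record that a Salem number is an algebraic unit: its minimal polynomial is monic and reciprocal with roots $s,\,1/s$ and finitely many conjugate pairs $\alpha_1,\bar\alpha_1,\dots,\alpha_k,\bar\alpha_k$ on the unit circle, so that $\Z[s,s^{-1}]=\Z[s]\subset\Ok$ and the entries of every matrix in the image of $\rho_s$ are algebraic integers. Consequently $\rho_s(G)\subset GL_m(\Ok)$, and under the product of all archimedean embeddings $M_m(\Ok)$ sits as a \emph{lattice} in $\prod_v M_m(K_v)=M_m(\R)_{(s)}\times M_m(\R)_{(1/s)}\times\prod_{j=1}^k M_m(\C)_{(\alpha_j)}$. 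The whole proof will be an instance of the standard principle that a subgroup of $GL_m(\Ok)$ is discrete at one place once its images at all other archimedean places are bounded.

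Next I would analyze $\rho$ place by place. On a unit-circle conjugate $t=\alpha_j$ one has $1/t=\bar t$, so $M^*(\alpha_j)=M(\alpha_j)^\dagger$; condition (1) then says $\rho_{\alpha_j}(G)$ preserves the form $J_{\alpha_j}$, while condition (2) gives $J_{\alpha_j}=(J_{1/\alpha_j})^\intercal=(\overline{J_{\alpha_j}})^\intercal=J_{\alpha_j}^\dagger$, i.e.\ $J_{\alpha_j}$ is Hermitian. If $\alpha_j$ lies in the neighborhood $\eta$ of condition (3), then $J_{\alpha_j}$ is positive definite and $\rho_{\alpha_j}(G)\subset U(J_{\alpha_j})\cong U(m)$ is contained in a \emph{compact} group. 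The remaining real place $t=1/s$ is handled by the form itself: evaluating the identity $M^\intercal(1/t)J_tM(t)=J_t$ at $t=s$ gives $\rho_{1/s}(g)^\intercal J_s\,\rho_s(g)=J_s$, so (for $s$ chosen with $J_s$ invertible) $\rho_{1/s}(g)=J_s^{-\intercal}\rho_s(g)^{-\intercal}J_s^{\intercal}$. Thus the $(1/s)$-image is the contragredient of the $(s)$-image, and in particular $\rho_s(g)\to I$ forces $\rho_{1/s}(g)\to I$.

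The crux --- and what I expect to be the main obstacle --- is producing \emph{infinitely many} Salem numbers whose unit-circle conjugates all lie inside the prescribed neighborhood $\eta$; note that one cannot simply take $s$ itself close to $1$, since whether Salem numbers accumulate at $1$ is Lehmer's problem. I would sidestep this by using degree-$4$ Salem numbers with a single conjugate pair: for integers $a,b$ let the minimal polynomial factor over $\R$ as $(x^2-px+1)(x^2-qx+1)$ with $p+q=a$, $pq=b-2$, where $p>2$ yields the real pair $s,1/s$ and $q=2\cos\phi\in(-2,2)$ yields $e^{\pm i\phi}$. By choosing, for each large $a$, an integer $b$ with $q$ just below $2$, the pair $e^{\pm i\phi}$ is forced arbitrarily close to $1$, hence into $\eta$; letting $a\to\infty$ produces infinitely many distinct such numbers, and one discards the finitely many parameter choices for which the quartic is reducible or for which $\det J_s=0$. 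This clustering of the conjugates at $1$ is exactly the delicate point the Salem condition is designed to supply, since it is precisely the placement of the conjugates on the unit circle that makes $J$ Hermitian there.

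Finally I would assemble the discreteness argument. Suppose $g_n\in G$ with $\rho_s(g_n)\to I$ at the $(s)$-place. By the previous paragraph $\rho_{1/s}(g_n)\to I$ as well, while each $\rho_{\alpha_j}(g_n)$ stays in the compact group $U(J_{\alpha_j})$ and is therefore bounded. Hence the lattice elements $\rho_s(g_n)-I\in M_m(\Ok)$ have small norm at $(s)$ and $(1/s)$ and bounded norm at every $(\alpha_j)$, so they lie in the intersection of the lattice with a fixed bounded region, a \emph{finite} set. Since the $(s)$-components tend to $0$, they must vanish for all large $n$; as the embedding at $(s)$ is injective on $K$, this forces $\rho_s(g_n)=I$. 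Thus the identity is isolated in $\rho_s(G)$, which is therefore discrete, completing the proof for each $s$ in the infinite family constructed above.
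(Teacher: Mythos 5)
Your proposal is correct, and its discreteness engine is the same one the paper runs: specialize at a Salem number so the image lies in $GL_m(\mathcal{O}_K)$, bound the image at every archimedean place other than the one carrying $s$ --- compactness coming from positive definiteness of the Hermitian form at the unit-circle places (the paper's Proposition \ref{bounded}), and the contragredient identity $\rho_{1/s}(g)=J_s^{-\intercal}\rho_s(g)^{-\intercal}J_s^{\intercal}$ at the second real place --- then invoke finiteness of the set of algebraic integers of bounded degree all of whose conjugates are bounded. Your Minkowski-lattice phrasing of that last step and the paper's entry-by-entry phrasing in Theorem \ref{SUdiscrete} are the same argument in different clothing. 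Where you genuinely depart from the paper is in producing the Salem numbers. The paper's Lemma \ref{posreal} shows that for an \emph{arbitrary} Salem number $s$ and any arc $\eta$ about $1$, infinitely many powers $s^m$ have all their complex conjugates inside $\eta$; its proof uses multiplicative independence of the conjugate angles plus Kronecker's density theorem. You instead construct explicit degree-$4$ Salem numbers from reciprocal quartics $(x^2-px+1)(x^2-qx+1)$ with $q$ slightly below $2$. Your route is more elementary (no Kronecker) and yields completely explicit numbers, and it fully suffices for Theorem \ref{thmdiscrep}; what it does not give is the paper's stronger conclusion that \emph{every} Salem number has infinitely many powers that work, which is exactly what Section \ref{sec:Comm} later exploits: the commensurability results need two specializations $s^n$, $s^m$ at powers of the \emph{same} Salem number, so that both forms live over the same field $\mathbb{Q}(s)$.

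One small patch is needed in your construction: the claim that one ``discards the finitely many parameter choices for which the quartic is reducible'' is not justified as stated, because reducibility has to be ruled out for the particular $b$ you select for each $a$, and a priori a careless selection could hit reducible quartics infinitely often. The fix is easy and makes the construction cleaner: take $b=2a-3$, so that $a^2-4b+8=(a-4)^2+4$, which is never a perfect square for $a\geq 5$; hence $p,q\notin\mathbb{Q}$ and the quartic is irreducible (it has no root at $\pm1$ since $p>2$ and $|q|<2$), while $q=\tfrac{1}{2}\bigl(a-\sqrt{(a-4)^2+4}\,\bigr)>2-\tfrac{1}{a-4}$ still forces the conjugate pair $e^{\pm i\phi}$ into $\eta$ for all large $a$. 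The exclusion of the finitely many $s$ with $\det J_s=0$ is fine as written, since $\det J_t$ is a not-identically-zero function of $t$ with finitely many zeros.
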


 Further applying a classification theorem of hermitian forms from \cite{SCHAR} proves the following commensurability result of the target groups.

\begin{cor}
For $\rho_t:G\rightarrow SL_{2m+1}(\mathbb{Z}[t,t^{-t}])$ as in Theorem \ref{thmdiscrep}, there exists infinitely many Salem numbers $s$, so that for infinitely many integers $n,k$ the specializations $\rho_{s^k}$ at $t=s^k$ and $\rho_{s^n}$ at $t=s^n$ map into commensurable lattices.
\end{cor}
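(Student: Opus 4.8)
The plan is to exhibit both specializations as subgroups of arithmetic unitary lattices attached to hermitian forms over a single number field, and then to use the classification of \cite{SCHAR} to show that the two forms are similar, so that the lattices they determine are commensurable. First I would fix a Salem number $s$ in the family produced by Theorem \ref{thmdiscrep} and set $K=\Q(s)$, with $k\subset K$ the fixed field of the involution $\sigma\colon s\mapsto 1/s$; since the defining polynomial of $s$ is reciprocal, $k=\Q(s+s^{-1})$ is totally real and $K/k$ is quadratic. Every power $s^{j}$ is again a Salem number, $\sigma$ sends $s^{j}\mapsto s^{-j}$, and because $s$ is the unique conjugate of absolute value larger than $1$ one checks that $\Q(s^{j})=K$ for every $j\ge 1$; thus all the forms $J_{s^{j}}$ are hermitian forms over one and the same pair $(K,k)$ for the same involution $\sigma$. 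Hypothesis (1) shows that the image of $\rho_{s^{j}}$ preserves $J_{s^{j}}$ and hence lies in $SU(J_{s^{j}})$, an algebraic group over $k$ whose $\Ok$-points contain $\rho_{s^{j}}(G)$.

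Next I would determine the archimedean local data. The real place $w_{0}$ of $k$ at which $s+s^{-1}$ is largest splits in $K$ (both $s$ and $s^{-1}$ are real there), so $SU(J_{s^{j}})(k_{w_{0}})\cong SL_{2m+1}(\R)$ independently of $j$; the remaining $d-1$ real places of $k$ ramify in $K$ to its complex places, coming from the unit-circle conjugates $\zeta_{1},\dots,\zeta_{d-1}$ of $s$, and at these the classification sees only the signature of $J_{s^{j}}$, read off from $\zeta_{i}^{\,j}$. By Kronecker--Weyl recurrence of the orbit $(\zeta_{1}^{\,j},\dots,\zeta_{d-1}^{\,j})$ on the torus, there are infinitely many $j$ for which every $\zeta_{i}^{\,j}$ lies in the neighborhood $\eta$ of hypothesis (3); for such $j$ the form $J_{s^{j}}$ is positive definite at all complex places, so $\rho_{s^{j}}$ is discrete by Theorem \ref{thmdiscrep}, and, crucially, all these forms share identical signature data. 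Fix two such exponents and call them $k$ and $n$.

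It then remains to compare $J_{s^{k}}$ and $J_{s^{n}}$ over $(K,k)$: they have equal rank $2m+1$ and, by the previous step, equal signatures at every real place of $k$. Here the odd rank does the essential work. Scaling a rank-$r$ hermitian form by $\lambda\in k^{\times}$ multiplies its discriminant by $\lambda^{r}$, and since $\alpha^{2}=N_{K/k}(\alpha)$ for every $\alpha\in k^{\times}$ the group $k^{\times}/N_{K/k}(K^{\times})$ is $2$-torsion, whence for odd $r=2m+1$ the map $\lambda\mapsto\lambda^{r}$ is the identity on it; scaling therefore realizes every discriminant class while, because norms are positive at complex places, leaving the signatures unchanged. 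Hence by \cite{SCHAR} the similarity class of an odd-rank hermitian form over $(K,k)$ is determined by its signatures alone, so $J_{s^{k}}$ and $J_{s^{n}}$ are similar and $SU(J_{s^{k}})$, $SU(J_{s^{n}})$ are isomorphic as $k$-groups. The isomorphism conjugates the two arithmetic groups into commensurable lattices in $SL_{2m+1}(\R)$, into which $\rho_{s^{k}}(G)$ and $\rho_{s^{n}}(G)$ map. As there are infinitely many admissible pairs $(n,k)$, and infinitely many starting Salem numbers $s$ from Theorem \ref{thmdiscrep}, the corollary follows.

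The crux — and the only place the odd dimension is used — is the similarity argument: the classification of \cite{SCHAR} leaves the discriminant as a potential obstruction, and it is precisely oddness of the rank $2m+1$ that lets a scalar in $k^{\times}$ absorb it without altering the signatures. The remaining care is bookkeeping: confirming, via the recurrence above, that infinitely many powers are simultaneously definite so that their signature data match, and invoking the Borel--Harish-Chandra principle that $k$-isomorphic semisimple groups have commensurable arithmetic subgroups in order to pass from similarity of the forms to genuine commensurability of the $\Ok$-lattices rather than mere isomorphism of the ambient $k$-groups.
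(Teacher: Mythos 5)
Your proposal is correct, and its number-theoretic core coincides with the paper's: place all the specialized forms $J_{s^j}$ over the single pair $K=\Q(s)$, $L=\Q(s+\frac{1}{s})$ with the involution $s\mapsto\frac{1}{s}$; use recurrence of the unit-circle conjugates (the paper's Lemma \ref{posreal}) to pick exponents for which every form is positive definite at all ramified archimedean places, so the signature data agree; and then use Scharlau's classification \cite{SCHAR} plus the odd-rank trick that a scalar $\lambda\in L^\times$ absorbs the discriminant class because $\lambda^{2m+1}\equiv\lambda$ modulo norms (the paper's Proposition \ref{samedet}). Where you genuinely diverge is the last step, from similarity of forms to commensurability of images: you cite the Borel--Harish-Chandra principle that a $k$-isomorphism of semisimple groups carries arithmetic subgroups to commensurable arithmetic subgroups, whereas the paper proves this by hand in Proposition \ref{SUcomm} --- choose $\gamma\in\Ok$ with $\gamma Q$ integral, pass to the congruence subgroup $N=\ker\bigl(SU(J_1,\Ok)\to SU(J_1,\Ok/\langle\gamma^2\rangle)\bigr)$, verify directly that $QNQ^{-1}$ is integral and fixes $J_2$, and invoke Margulis to get finite index. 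Your route is shorter and more structural, but leans on heavier machinery; the paper's is self-contained and exhibits the commensurating element explicitly (it also still needs Harish-Chandra, via \cite{DWM}, but only for finiteness of covolume, which you likewise need). Two smaller points. First, your verification that $\Q(s^j)=\Q(s)$ for all $j$ is a detail the paper leaves implicit, yet it is genuinely needed for all the forms to be compared over one field, so it is good that you address it. Second, your justification that the discriminant-fixing scaling ``leaves the signatures unchanged because norms are positive at complex places'' is garbled as stated: scaling by a scalar that is negative at a ramified place of $L$ flips the signature there, and the sign at such a place is an invariant of the norm class, so it cannot be adjusted away. What rescues the argument --- and is exactly how the paper argues --- is that the scalar you actually need is the ratio $\det J_{s^k}/\det J_{s^n}$, which is positive at every ramified place precisely because you already arranged both forms to be positive definite there; with that sentence repaired, your proof is complete.
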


Squier showed in \cite{sq} that the reduced Burau representation is sesquilinear and satisfies the criteria for the Theorem \ref{thmdiscrep}. Example \ref{Burau4} gives explicit Salem numbers so that specializing the reduced Burau representation to these numbers is discrete. Section \ref{sec:hecke} shows that these results can be applied to all of the irreducible Jones representations, by generalizing Squier's result and proving that all the irreducible summands of the Jones representations fix some nondegenerate sesquilinear form.  Section \ref{sec:BMW} proves that the BMW representations are also sesquilinear, and gives some partial results for discrete specializations.  Lastly, Section \ref{sec:Comm} discusses the lattice structure and commensurability of the target groups for the Salem number specializations.

\section*{Acknowledgements}
The results of this paper formed part of the author's dissertation.  The author would like to thank her advisor, Darren Long, for endless insight, guidance, and support on this project;  Vaughan Jones for helpful conversations; and Warren Scherich for moral support.  This paper was funded in part by the NSF grant DMS 1463740.


\section{Discrete representations using Salem numbers}
\subsection{ Motivation from Squier and the Burau Representation.}\label{motivation}

The reduced Burau representation $\rho_{n,t}:B_{n+1}\rightarrow GL_n(\mathbb{Z}[t,t^{-1}])$ is an irreducible representation of the braid group.  (For the remainder of the paper, the word 'reduced' will be omitted and it is understood that the Burau representation is reduced.) These representations depend on $n$, where $n+1$ is the number of strands, and are parameterized by a variable $t$. Squier showed in \cite{sq} that there is a nondegenerate, $n$ dimensional matrix  $J_{n,t}$ satisfying the following equation
\begin{align}M^*J_{n,t}M=&J_{n,t} \end{align}
\noindent for all $M$ in the image of $\rho_{n,t}$. Here, $M^*$ is the transpose of $M$ after replacing $t$ with $\frac{1}{t}$ in the entries of $M$, $M^*(t):=M(\frac{1}{t})^\intercal $.  $J_{n,t}$ is sesquilinear with respect to $*$, $J_{n,t}^*=J_{n,t}$, and letting $t=x^2$, $J_{n,t}$ is given by the following tri-diagornal matrix

\[J_{n,t}=
\begin{tikzpicture}[baseline=(current bounding box.center)]
\matrix (m) [matrix of math nodes,nodes in empty cells,right delimiter={]},left delimiter={[} ]{
x+x^{-1}  & -1 &     \\
-1  &  &   -1 \\
    & -1 & x+x^{-1} \\
} ;
\draw[dashed] (m-1-1)-- (m-3-3);
\draw[dashed] (m-1-2)-- (m-2-3);
\draw[dashed] (m-2-1)-- (m-3-2);
\end{tikzpicture}
.\]
 When $t$ is a unit complex number, equation (2.1) agrees with the usual unitary relation $(\bar{M})^\intercal M=Id$. Representations that satisfy $(2.1)$ are called \textit{sesquilinear}, and are said to map into a \textit{generalized unitary group}. This terminology will be made precise in the next section.
 
These generalized unitary groups are the key to finding discrete specializations. The method described here is to show that carefully chosen specializations of the parameter $t$ make the entire generalized unitary group discrete, thus making the image of the representation discrete.

Section \ref{sec:hecke} describes how the Burau representation is one summand of the Jones representations, and the sesquilinear property generalizes to all of the Jones representations. 


\subsection{Unitary Groups} In general, unitary groups are matrix groups which respect a form, or inner product. These notions heavily rely on the ring of coefficients and an involution of that ring.  Let $R$ be a ring and $\phi$ an order two automorphism of $R$ . For a matrix $M$ defined over $R$, let  $M^*=\left(M^\phi \right) ^\intercal $, where  $M^\phi $ is matrix after applying $\phi $ to the entries of $M$. For the Burau representation in Section \ref{motivation}, $\phi $ is the map sending $t\mapsto  \frac{1}{t}$ and $R=\mathbb{Z}[t,t^{-1}]$.

\begin{defn} For a matrix $J$ such that $J^*=J$, the \textbf{generalized unitary group}  is 
$$U_m(J,\phi ,R):=\{M\in GL_m(R)|M^* J M=J\}.$$
\end{defn}

Here, $J$ is called a \textbf{sesquilinear form} with respect to $\phi $. For example, in this notation, the familiar unitary group $U_m$ can be written as $U_m(Id, -,\mathbb{C})$ where `$-$' is complex conjugation.  

\subsubsection{Creating Discrete Unitary Groups} 
The Burau representation can be written as $\rho_{n,t}:B_{n+1} \rightarrow U_n(J_{n,t},\phi , \mathbb{Z}[t,t^{-1}])$. With the goal of parameter specialization in mind, the relevant choice for the coefficient ring is a number ring. Discreteness of the unitary group is a delicate relationship between the form $J$ and the algebraic structure of the number ring. More precisely, let $L$ be a totally real algebraic field extension of $\mathbb{Q}$ and let $K$ be a degree two field extension of $L$. Let $\phi $ be the order two generator of $Gal(K/L)$ and let $\Ok$, respectively  $\mathcal{O}_L$, denote the rings of integers of $K$ and $L$. 
\begin{center}
\begin{tikzpicture}
\matrix(a)[matrix of math nodes,
row sep=2em, column sep=3em,
text height=1.5ex, text depth=0.25ex]
{ K\\
 L\\
 };
\path(a-1-1)edge node[right]{2} (a-2-1);
\draw [-] (0,1) to[out=90,in=180] (.25,1.25);
\draw [->] (.25,1.25) to[out=0,in=45] (.3,.85);
\draw[](.25,1.25) node[right]{$\phi $};

\begin{scope}[xshift = -2.3cm]
\matrix(a)[matrix of math nodes,
row sep=2em, column sep=3em,
text height=1.5ex, text depth=0.25ex]
{ \Ok\\
 \mathcal{O}_L\\
 };
\path(a-1-1)edge  (a-2-1);
\draw [-] (0,1) to[out=90,in=180] (.25,1.25);
\draw [->] (.25,1.25) to[out=0,in=45] (.3,.85);
\draw[](.25,1.25) node[right]{$\phi $};
\end{scope}

\begin{scope}[xshift = 2.6cm]
\matrix(a)[matrix of math nodes,
row sep=2em, column sep=3em,
text height=1.5ex, text depth=0.25ex]
{ K^\sigma\subseteq \mathbb{C} \\
 L^\sigma\subseteq \mathbb{R}\\
 };
\path(a-1-1) --(a-2-1);
\draw [-] (-.5,1) to[out=90,in=180] (-.25,1.25);
\draw [->] (-.25,1.25) to[out=0,in=45] (-.15,.9);
\draw[](-.2,1.25) node[right]{$\phi ^\sigma$};
  \end{scope}
\end{tikzpicture}
\end{center}

   Let $\sigma$ be a complex place of $K$, which in this setting is a field homomorphism $\sigma:K\rightarrow \C$ different from $\phi $ and the identity map.  We denote $X^\sigma=\sigma(X)$ for any $X$ in $K$. The algebraic structure is passed along by $\sigma$, meaning  $\mathcal{O}_{K^\sigma}=(\Ok)^\sigma$ is the ring of integers for $K^\sigma$ and $\phi ^\sigma=\sigma\phi \sigma^{-1}$ is an involution on $K^\sigma$. 
   
    Let $J$ be a matrix over $\Ok$ that is sesquilinear with respect to $\phi $.   $J^\sigma$ is sesquilinear with respect to $\phi ^\sigma$. So in particular, $$U_m(J^\sigma,\phi ^\sigma,\mathcal{O}_{K^\sigma})=\{M\in GL_m(\mathcal{O}_{K^\sigma})|({M}^{\phi ^\sigma})^\intercal  J^\sigma M=J^\sigma\}.$$


Since $\sigma$ is a homomorphism, we can see that $( U_m(J,\phi ,\Ok))^\sigma= U_m(J^\sigma,\phi ^\sigma,\mathcal{O}_{K^\sigma})$ by applying $\sigma$ to the equation $J=M^*JM$.


The following results outline compatibility requirements between $J$ and $\mathcal{O}_{K}$, which result in  $U_m(J,\phi , \mathcal{O}_{K})$ as a discrete subgroup of $GL_m(\mathbb{R})$, under the standard euclidean topology.

\begin{prop}\label{bounded}

$U_m(J^\sigma,\phi ^\sigma,\mathcal{O}_{K^\sigma})$  is a bounded group when $J^\sigma$ is positive definite, and $\phi ^\sigma$ is complex conjugation.
\end{prop}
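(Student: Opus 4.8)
The plan is to recognize the defining equation of $U_m(J^\sigma,\phi^\sigma,\mathcal{O}_{K^\sigma})$ as the statement that its elements preserve a genuine positive-definite Hermitian inner product on $\C^m$, and then to deduce boundedness from compactness of the ordinary unitary group.

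First I would unwind the hypotheses. Since $K^\sigma\subseteq\C$, every $M\in U_m(J^\sigma,\phi^\sigma,\mathcal{O}_{K^\sigma})$ is an honest complex matrix, and because $\phi^\sigma$ is complex conjugation we have $M^{\phi^\sigma}=\overline{M}$. Writing $M^*=\overline{M}^\intercal$ for the usual conjugate transpose, the defining relation $(M^{\phi^\sigma})^\intercal J^\sigma M=J^\sigma$ becomes $M^*J^\sigma M=J^\sigma$, which says exactly that $M$ preserves the form $\langle v,w\rangle:=v^*J^\sigma w$. Applying $\sigma$ to the identity $J^*=J$ shows $(J^\sigma)^*=J^\sigma$, so $J^\sigma$ is Hermitian, and by hypothesis it is positive definite; hence $\langle\cdot,\cdot\rangle$ is a positive-definite Hermitian inner product.

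Next I would pass to the standard unitary group by a change of basis. A positive-definite Hermitian matrix has a positive-definite Hermitian square root, so $J^\sigma=P^*P$ with $P=(J^\sigma)^{1/2}\in GL_m(\C)$. A short computation shows $M^*J^\sigma M=J^\sigma$ is equivalent to $(PMP^{-1})^*(PMP^{-1})=\mathrm{Id}$, so conjugation by the fixed matrix $P$ carries the full complex stabilizer $\{M\in GL_m(\C)\mid M^*J^\sigma M=J^\sigma\}$ bijectively onto the standard unitary group $U(m)=\{U\mid U^*U=\mathrm{Id}\}$.

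Finally I would invoke compactness: $U(m)$ is closed (cut out by polynomial equations in the real and imaginary parts of the entries) and bounded (each column has Euclidean norm $1$), hence compact. Since conjugation by the fixed invertible matrix $P$ is a homeomorphism of $M_m(\C)$, the complex stabilizer of $J^\sigma$ is also bounded, and as $U_m(J^\sigma,\phi^\sigma,\mathcal{O}_{K^\sigma})$ is by definition a subset of that stabilizer, it too is bounded, which is the claim. I do not expect a serious obstacle; the only content is the classical fact that the unitary group of a positive-definite Hermitian form is compact, and the one point requiring care is confirming that positive definiteness of $J^\sigma$ is meaningful, i.e.\ that $J^\sigma$ is Hermitian, which the hypothesis $J^*=J$ supplies.
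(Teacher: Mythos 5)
Your proof is correct and follows essentially the same route as the paper: both factor the positive-definite Hermitian matrix as $J^\sigma=Q^*Q$ (you via the Hermitian square root, the paper via Sylvester's Law of Inertia and Gram--Schmidt) and then conjugate the group into the standard compact unitary group $U(m)$ to conclude boundedness. The extra details you supply --- checking that $J^\sigma$ is Hermitian and that conjugation by a fixed invertible matrix preserves boundedness --- are implicit in the paper's argument.
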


\begin{proof}
 Because $J^\sigma$ is positive definite,  by Sylvester's Law of Inertia and the Gram-Schmidt process, there exists a matrix $Q\in GL_m(\mathbb{C})$ so that $J^\sigma=Q^*IdQ$. This implies that $Q U_m(J^\sigma,\phi ^\sigma,\mathcal{O}_{K^\sigma})Q^{-1}\subseteq U_m(Id,\phi ^\sigma, \mathbb{C})$  which is a subgroup of the compact group $U_m$.\end{proof}

\begin{thm}\label{SUdiscrete}
$U_m(J,\phi ,\Ok)$ is discrete if for every complex place $\sigma$ of $K$, $J^\sigma$ is positive definite and $\phi ^\sigma$ is complex conjugacy.
\end{thm}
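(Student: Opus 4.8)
The plan is to realize $U := U_m(J,\phi,\Ok)$ as a subgroup of the lattice $M_m(\Ok)$ sitting inside the product of all archimedean completions of $K$, and then to show that a small neighbourhood of the identity in the distinguished real place meets $U$ in a set that is forced to be bounded at every other place, hence finite.

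First I would set up the Minkowski-type embedding. Write $\sigma_0$ for the distinguished real place through which $U\subseteq GL_m(\R)$, and let $\sigma_0=v_1,v_2,\dots$ enumerate the archimedean places of $K$. Since $\Ok$ is the ring of integers of a number field, its image under $x\mapsto(x^{v_1},x^{v_2},\dots)$ is a full lattice in $K\otimes_\Q\R\cong\prod_i K_{v_i}$, so $M_m(\Ok)\cong\Ok^{m^2}$ embeds as a discrete subset of $\prod_i M_m(K_{v_i})$. In particular, any subset of $M_m(\Ok)$ whose image is bounded in every coordinate is finite. As $U\subseteq GL_m(\Ok)\subseteq M_m(\Ok)$, to prove $U$ discrete in the $\sigma_0$-factor it suffices to produce a neighbourhood $V$ of the identity in $GL_m(\R)$ such that every $M\in U$ with $M^{\sigma_0}\in V$ has bounded image at all of the remaining places.

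Next I would dispose of the complex places. By the remark preceding the statement, $(U)^\sigma=U_m(J^\sigma,\phi^\sigma,\mathcal{O}_{K^\sigma})$ for each place $\sigma$, and at a complex place the hypothesis gives that $J^\sigma$ is positive definite with $\phi^\sigma$ equal to complex conjugation. Proposition \ref{bounded} then applies verbatim to show $(U)^\sigma$ is bounded, so with no constraint on $M$ at all, the coordinate $M^\sigma$ already lies in a fixed bounded region for every complex place.

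The remaining, and genuinely delicate, places are the real ones, and this is the main obstacle: a real place has $\phi^\sigma$ a nontrivial (discontinuous) field automorphism rather than complex conjugation, so the compactness input of Proposition \ref{bounded} is unavailable. The key observation is that the defining relation ties conjugate real places together. Real places occur in pairs $\{\sigma,\sigma\phi\}$ lying over a common real place of $L$, and applying $\sigma_0$ to $(M^\phi)^\intercal J M=J$ yields $(M^{\sigma_0\phi})^\intercal J^{\sigma_0}M^{\sigma_0}=J^{\sigma_0}$, whence $M^{\sigma_0\phi}=\left(J^{\sigma_0}(M^{\sigma_0})^{-1}(J^{\sigma_0})^{-1}\right)^\intercal$ because $J^{\sigma_0}$ is nondegenerate. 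Choosing $V$ to be a small ball around the identity, on which $M^{\sigma_0}$ and $(M^{\sigma_0})^{-1}$ are uniformly bounded, therefore forces $M^{\sigma_0\phi}$ into a bounded set as well. In the Salem setting these are the only two real places, so combined with the complex bound every coordinate of $M$ is bounded; intersecting with the lattice $M_m(\Ok)$ leaves finitely many $M$, and shrinking $V$ isolates the identity, giving discreteness. For a general totally real $L$ with several pairs of real places one would have to bound each remaining real pair by the same device, and I would verify that the Salem construction producing $K$ in fact admits no extra real places, as it does in the intended application.
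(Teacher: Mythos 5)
Your proposal is correct and follows essentially the same route as the paper's proof: both bound all complex places via Proposition \ref{bounded}, recover the remaining real embedding by rearranging the defining relation to express $M^\phi$ as $(JM^{-1}J^{-1})^\intercal$, and then conclude by finiteness of the set of matrices over $\Ok$ that are bounded at every archimedean place. Your Minkowski-lattice packaging of that last step and your neighborhood-of-the-identity formulation of discreteness are only cosmetic variants of the paper's sequence argument and its Northcott-style count of algebraic integers with bounded degree and bounded Galois conjugates.
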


\begin{proof}

 Assume that $\{M_n\}$ converges to the identity in $U_m(J,\phi ,\Ok)$.  To show $\{M_n\}$ is eventually constant, we will show that for $n$ large, there are only finitely many possibilities for the entries $(M_n)_{ij}$. 
 
 By assumption, for each $\sigma$ the group $U_m(J^\sigma,\phi ^\sigma, O_{K^\sigma})$ is  bounded by Proposition \ref{bounded}. Also, for every $M_n$, $M_n^\sigma \in U_m(J^\sigma,\phi ^\sigma, O_{K^\sigma})$. So there exists a $B$ so that for large $n$, for all $i,j$, and for all $\sigma$, that $|(M_n^\sigma)_{ij}|<B$.

  For every $M\in U_m(J,\phi ,\Ok)$, the equation $M^*JM=J$ can be rearranged to $JMJ^{-1}=((M^\phi )^\intercal )^{-1}$, showing that $M$ and $((M^\phi )^\intercal )^{-1}$ are simultaneously conjugate. Thus $\{M_n^\phi \}$ also converges to the identity. Convergent sequences are bounded, so for large enough $n$,  $|(M_n)_{ij}|<B$ and $|(M_n)_{ij}^\phi |<B$ for every $ij$-entry.

$L$ is a totally real degree two subfield of $K$, and $\phi $ generates $Gal(K/L)$. So $K$ has one nonidentity real embedding  $\phi $, and all other embeddings are complex. Thus we have shown above that for large $n$ there is a uniform bound $B$ for each entry $(M_n)_{ij}$ and each  Galois conjugate of $(M_n)_{ij}$. There are only finitely many algebraic integers $\alpha$, so that $deg(\alpha)\leq deg(K/\mathbb{Q})$, and with the property that $\alpha$ and all of the Galois conjugates of $\alpha$ have absolute values bounded above by $B$. So there are only finitely many possible entries for $(M_n)_{ij}$, which implies the sequence $\{M_n\}$ is eventually constant.
\end{proof}
\begin{cor}\label{discbraid}
 If $\rho:G\rightarrow U_m(J,\phi ,\Ok)$ is a representation of a group $G$ so that  for every non-identity place $\sigma$ of $K$, $J^\sigma$ is positive definite and $\phi ^\sigma$ is complex conjugacy, then $\rho$ is a discrete representation.
\end{cor}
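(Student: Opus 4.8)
The plan is to treat this as a direct consequence of Theorem~\ref{SUdiscrete}: the strategy is first to argue that the ambient generalized unitary group $U_m(J,\phi,\Ok)$ is itself discrete in $GL_m(\R)$, and then to observe that the image of $\rho$ sits inside this group as a subgroup, so that discreteness descends automatically.

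For the first step, I would check that the hypotheses of the corollary are at least as strong as those required by Theorem~\ref{SUdiscrete}. The corollary assumes that $J^\sigma$ is positive definite and $\phi^\sigma$ is complex conjugation for \emph{every} non-identity place $\sigma$ of $K$; in particular this holds for every \emph{complex} place $\sigma$, which is exactly the hypothesis of Theorem~\ref{SUdiscrete}. Invoking that theorem then gives that $U_m(J,\phi,\Ok)$ is a discrete subgroup of $GL_m(\R)$.

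For the second step, by the definition of a representation into a generalized unitary group, the image $\rho(G)$ is a subgroup of $U_m(J,\phi,\Ok)$. Since any subgroup of a discrete subgroup of $GL_m(\R)$ is again discrete, $\rho(G)$ is discrete; and a representation is called discrete precisely when its image is discrete, so $\rho$ is discrete, as claimed.

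I do not expect a genuine obstacle here, since the mathematical content is carried entirely by Theorem~\ref{SUdiscrete} and by the elementary fact that discreteness passes to subgroups. The only point that deserves a moment of care is the place bookkeeping: one should confirm that the conditions imposed at the non-identity places indeed subsume the conditions at all complex places used in the proof of Theorem~\ref{SUdiscrete} (recall that the non-identity places of $K$ consist of the single real embedding $\phi$ together with the complex places). Once this matching is noted, the corollary follows immediately.
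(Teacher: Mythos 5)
Your proposal is correct and matches the paper's (implicit) reasoning exactly: the corollary is stated as an immediate consequence of Theorem~\ref{SUdiscrete}, since the hypotheses at all non-identity places subsume those at the complex places, making $U_m(J,\phi,\Ok)$ discrete, and the image of $\rho$ inherits discreteness as a subgroup. Your remark on the place bookkeeping is the right point of care, and it is handled correctly.
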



 



At first glance, the requirements for Corollary \ref{discbraid} seem very specific and perhaps it is doubtful that any such a representation could exist. However, as described in Section \ref{motivation}, Squier showed that the Burau representation maps into a generalized unitary group over $\mathbb{Z}[t,t^{-1}]$, so the next task is to find values of $t$ so that the form and coefficient ring satisfy the specific hypothesis of Corollary \ref{discbraid}. Section \ref{burau} will show how careful specializations of $t$ to certain Salem numbers meet all of the conditions for Corollary \ref{discbraid}. More generally, Section \ref{sec:hecke} will show that every irreducible Jones representation fixes a form $J_t$ with a parameter, and specializations to Salem numbers can also be found to satisfy Corollary \ref{discbraid}. 

\subsection{Salem Numbers}\label{sec:Salem}

 Salem numbers are the key ingredient to the application of Corollary \ref{discbraid}, which requires a real algebraic number field with tight control and understanding of each of its complex embeddings. 

\begin{defn}
  A Salem number s is a real algebraic unit greater than 1, with one real Galois conjugate $\frac{1}{s}$, and all complex Galois conjugates have absolute value equal to 1.
  \end{defn}
  
       \begin{center}

\begin{figure}[h!]
  \begin{tikzpicture}[scale=0.25]
\draw[thick,->] (-4.5,0) -- (4.5,0) ;
\draw[thick,->] (0,-4.5) -- (0,4.5);
\draw[thick] (0,0) circle (3cm);
\filldraw[fill=red!40!white, draw=black] (3.5,0) circle (.2cm)  node[below]{$s$};
\filldraw[fill=red!40!white, draw=black] (2.5,0) circle (.2cm) ;
\filldraw[fill=red!40!white, draw=black] (2,2.3) circle (.2cm);
\filldraw[fill=red!40!white, draw=black] (2,-2.3) circle (.2cm);
\filldraw[fill=red!40!white, draw=black] (-2.8, .9) circle (.2cm);
\filldraw[fill=red!40!white, draw=black] (-2.8, -.9) circle (.2cm);
  \end{tikzpicture}
\caption{ A schematic picture of an order 6 Salem number.}
\end{figure}
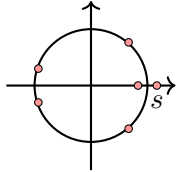
 \end{center}
 
   For example, the largest real root of Lehmer's Polynomial,   called Lehmer's number,$$x^{10}+x^9-x^7-x^6-x^5-x^4-x^3+x+1,$$  is a Salem number.  Trivial Salem numbers of degree two are solutions to $s^2-ns+1$ for $n\in\mathbb{N}$, $n>2$. It is well known that there are infinitely many Salem numbers of arbitrarily large absolute value and degree.  In particular, if $s$ is a Salem number, then $s^{m}$ is also a Salem number for every positive integer $m$. One geometric consequence of this property that powers of Salem numbers are Salem numbers, is that by taking powers, one can control the spatial configuration of the complex Galois conjugates of a Salem number, as described in Lemma \ref{posreal}.

\begin{lem}\label{posreal}
 For any Salem number $s$, and for any interval containing 1 on the complex unit circle, there exists infinitely many integers $m$ so that every complex Galois conjugate of $s^m$ lies in the interval.
\end{lem}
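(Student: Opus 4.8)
The plan is to reduce the statement to a simultaneous approximation problem on a torus and solve it by a pigeonhole argument. First I would record the Galois conjugates of the powers $s^m$. If $\deg(s)=2k+2$, the conjugates of $s$ consist of the real numbers $s$ and $1/s$ together with $k$ complex-conjugate pairs $e^{\pm i\theta_1},\dots,e^{\pm i\theta_k}$ on the unit circle, with $0<\theta_j<\pi$. Since every embedding $\sigma$ satisfies $\sigma(s^m)=\sigma(s)^m$, the conjugates of $s^m$ are contained in the set of $m$-th powers $\{s^{\pm m}\}\cup\{e^{\pm i m\theta_j}\}$; in particular every complex conjugate of $s^m$ has the form $e^{\pm i m\theta_j}$. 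Hence it suffices to produce infinitely many positive integers $m$ for which $e^{i m\theta_1},\dots,e^{i m\theta_k}$ all lie in the prescribed arc simultaneously.

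Next I would pass from arcs to the torus. Fixing an arc $I$ around $1$, choose $\epsilon>0$ with $\{e^{i\phi}:|\phi|<\epsilon\}\subseteq I$, and set $\alpha_j=\theta_j/2\pi$ and $v=(\alpha_1,\dots,\alpha_k)$ in $(\R/\Z)^k$. Writing $\|\cdot\|$ for distance to the nearest integer in each coordinate, the condition ``$e^{im\theta_j}\in I$ for all $j$'' is implied by ``$\|m\alpha_j\|<\epsilon/2\pi$ for all $j$,'' i.e.\ the point $mv$ lies within $\epsilon/2\pi$ of the origin of the torus. Thus the lemma reduces to showing that the forward orbit $\{mv:m\ge 1\}$ returns arbitrarily close to the origin for infinitely many $m$.

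This recurrence is a standard pigeonhole argument. Partition the compact torus $(\R/\Z)^k$ into finitely many boxes each of diameter less than $\epsilon/2\pi$; among $v,2v,3v,\dots$ two points $av$ and $bv$ with $a<b$ share a box, so $m:=b-a>0$ satisfies $\|mv\|<\epsilon/2\pi$, giving one good exponent. To obtain infinitely many $m$ and to cover every arc $I$, I would run this for a sequence $\epsilon_n\to 0$, producing $m_n$ with $\|m_n v\|<\epsilon_n/2\pi$. The one point requiring care is that these $m_n$ are not eventually constant; here I use that no complex conjugate $e^{i\theta_j}$ of the Salem number $s$ is a root of unity (the minimal polynomial of $s$ is irreducible and has the real root $s>1$, so it is not cyclotomic), whence $\|m v\|>0$ for every fixed $m\ge 1$. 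Consequently a single exponent cannot achieve arbitrarily small approximations, so the $m_n$ take infinitely many values, and the lemma follows. I expect this infinitude step to be the only genuine subtlety; the approximation itself is immediate from compactness of the torus.
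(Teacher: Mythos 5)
Your proof is correct, but it takes a genuinely different route from the paper's. The paper first establishes that the unit-circle conjugates $e^{i\theta_1},\dots,e^{i\theta_k}$ are multiplicatively independent: if $\prod_j e^{i m_j\theta_j}=1$, applying a Galois automorphism carrying $e^{i\theta_1}$ to $s$ and comparing absolute values forces every $m_j=0$. It then invokes Kronecker's theorem to conclude that the orbit of $p=(e^{i\theta_1},\dots,e^{i\theta_k})$ under powering is \emph{dense} in the full torus $T^k$, and the lemma is the special case of visiting a neighborhood of $(1,\dots,1)$ infinitely often. You instead observe that visiting a neighborhood of the identity is a far weaker demand than density: by the pigeonhole argument (Dirichlet simultaneous approximation), the orbit $\{mv\}$ of \emph{any} point $v\in(\mathbb{R}/\mathbb{Z})^k$ recurs near $0$, with no independence hypothesis at all. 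The only arithmetic input you need is that no conjugate of a Salem number is a root of unity, which you use correctly to show the return times cannot stabilize and hence take infinitely many values; your justification (the minimal polynomial of $s$ has the root $s>1$, hence cannot be cyclotomic) is sound, and your reduction of ``conjugates of $s^m$'' to ``$m$-th powers of conjugates of $s$'' is also fine. The trade-off is this: the paper's argument, at the cost of proving multiplicative independence and citing Kronecker, yields the strictly stronger fact that the conjugates of powers of $s$ can be steered into \emph{arbitrary} prescribed arcs, not just clustered near $1$; your argument is elementary and self-contained but delivers only recurrence at the identity, which happens to be exactly what the lemma (and its use in Theorem 1.1 and in the commensurability results) requires.
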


   \begin{center}

  \begin{tikzpicture}[scale=0.25]
\draw[thick,->] (-4.5,0) -- (4.5,0) ;
\draw[thick,->] (0,-4.5) -- (0,4.5);
\draw[thick] (0,0) circle (3cm);
\filldraw[fill=red!40!white, draw=black] (3.5,0) circle (.2cm)  node[below]{$s$};
\filldraw[fill=red!40!white, draw=black] (2.5,0) circle (.2cm) ;
\filldraw[fill=red!40!white, draw=black] (1.7,2.5) circle (.2cm);
\filldraw[fill=red!40!white, draw=black] (1.7,-2.5) circle (.2cm);
\filldraw[fill=red!40!white, draw=black] (-2.8, .9) circle (.2cm);
\filldraw[fill=red!40!white, draw=black] (-2.8, -.9) circle (.2cm);

\draw[red,thick] (0,0)--(3,3);
\draw[red,thick] (0,0)--(3,-3);

\node[] at (7.5,0) {\Large$\rightsquigarrow$};

\begin{scope}[xshift=15cm]
\draw[thick,->] (-4.5,0) -- (6,0) ;
\draw[thick,->] (0,-4.5) -- (0,4.5);
\draw[thick] (0,0) circle (3cm);
\filldraw[fill=red!40!white, draw=black] (5.1,0) circle (.2cm)  node[below]{$s^m$};
\filldraw[fill=red!40!white, draw=black] (1.5,0) circle (.2cm) ;
\filldraw[fill=red!40!white, draw=black] (2.5,1.6) circle (.2cm);
\filldraw[fill=red!40!white, draw=black] (2.5,-1.6)circle (.2cm);
\filldraw[fill=red!40!white, draw=black] (2.8, .9) circle (.2cm);
\filldraw[fill=red!40!white, draw=black] (2.8, -.9) circle (.2cm);

\draw[red,thick] (0,0)--(3,3);
\draw[red,thick] (0,0)--(3,-3);
\end{scope}
  \end{tikzpicture}

 \end{center}

\begin{proof}
Let $ e^{i\theta_1}, \cdots ,e^{i\theta_k}$ be all the Galois conjugates of the Salem number $s$ with positive imaginary part.   Suppose that $\prod_{j=1}^k (e^{ i\theta_j })^{m_j}=1$. Let $\phi$ be the automorphism of the Galois closure of $s$ with the property that $\phi(e^{i\theta_1})=s$. Since $\phi$ must permute the Galois conjugates of $s$, for $j\neq 1$, $\phi(e^{i\theta_j})$ is again on the complex unit circle. Thus,

$$1=\phi(\prod_{j=1}^k (e^{i \theta_j })^{m_j})=s^{m_1}\prod_{j=2}^k \phi(e^{i\theta_j })^{m_j}, \text{which implies } \prod_{j=2}^k \phi(e^{i\theta_j })^{m_j}=\frac{1}{s^{m_j}}.$$

Since each $\phi(e^{i\theta_j})$ is a unit complex number,  it must be the case that each $m_j=0$.  This shows that the point $p=(e^{i\theta_1},\cdots,e^{i\theta_k})$ satisfies the criteria for Kronecker's Theorem. In particular, the set $\overline{\{p^m|m\in\mathbb{Z}\}}$ is dense in the torus $T^k$.   \end{proof}

Fixing an arbitrary Salem number $s$,  let $K=\Q(s)$, $L=\Q(s+\frac{1}{s})$, and $\Ok$ be the the ring of integers of $K$. 

 \begin{wrapfigure}{r}{3cm}
\begin{center}
\begin{tikzpicture}
\matrix(a)[matrix of math nodes,
row sep=2em, column sep=3em,
text height=1.5ex, text depth=0.25ex]
{\Q(s)=K\\
 \Q(s+\frac{1}{s})=L\\
\Q\\};
\path(a-1-1)edge node[right]{2} (a-2-1);
\path(a-2-1) edge node[right]{} (a-3-1);
\end{tikzpicture}
\end{center}
\end{wrapfigure}
 Since $s$ and $\frac{1}{s}$ are real and all other Galois conjugates of $s$ are  complex, $K$ has exactly two real embeddings.
For a complex embedding $\sigma$ of $K$, $(s+\frac{1}{s})^\sigma=2$Re$(s^\sigma)$ which is real. This shows that all embeddings of $L$ are real, and that $L$ is a totally real subfield of $K$. Since $s$ is a root of $X^2-(s+\frac{1}{s})X+1$, $K$ is degree two over $L$.

The Galois group of $K/L$ is generated by $\phi $ which maps $s\mapsto \frac{1}{s}$. (This exactly matches the involution $t\mapsto \frac{1}{t}$ needed in the seqsuilinear condition for the Burau representation.)   On the complex unit circle, inversion is the same as complex conjugation. So for the complex embeddings $\sigma$ of $K$, $\phi ^\sigma$ is complex conjugacy.  Notice for a sesquilinear matrix $J_t$ over $\Ok$ with a parameter $t$, specializing $t=s$ leaves $J_s^\sigma$ hermitian.\\

\noindent \textbf{Theorem 1.1:}
\textit{Let $\rho_t:G\rightarrow GL_m(\mathbb{Z}[t,t^{-1}])$ be a representation of a group $G$. Suppose there exists a matrix $J_t$ so that: 
\begin{enumerate}
\item $M^*J_tM=J_t$ for all $M$ in the image of $\rho_t$, 
\item $J_t=(J_{\frac{1}{t}})^\intercal $,
\item $J_t$ is positive definite for $t$ in a neighborhood $\eta$ of 1.
\end{enumerate}
Then, there exists infinitely many Salem numbers $s$, so that the specialization $\rho_s$ at $t=s$ is discrete.}

\begin{proof}
 By Lemma \ref{posreal}, there are infinitely many Salem numbers with the property that all the complex Galois conjugates lie in $\eta$. Let $s$ be one such Salem number. Specializing $t$ to $s$ gives $\rho_s:G\rightarrow U_m(J_s, \phi , O_{\mathbb{Q}(s)})$, where $\phi $ is the usual map given by $s\mapsto \frac{1}{s}$.

Let $\sigma$ be a complex place of $\mathbb{Q}(s)$ which is given by $s\mapsto z$ for $z$ a complex Galois conjugate of $s$. Then $J_s^\sigma=J_z$, and since $z\in\eta$, then $J_z$ is positive definite. By Corollary \ref{discbraid}, the specialization $\rho_s$ at $t=s$ is discrete. \end{proof}

\begin{remark}
 If the representations in Theorem 1.1 all have determinant 1, then the image is more than just discrete, but in fact is a subgroup of a lattice. See Section \ref{sec:Comm} for more details.
\end{remark}

\subsection{ The Burau Representation.}\label{burau}

\begin{prop}
 There are infinitely many Salem numbers $s$ so that the  Burau representation specialized to $t=s$ is discrete.
\end{prop}
\begin{proof}
The specialization of $\rho_{n,1}$ at $t=1$ collapses to an irreducible representation of the symmetric group.  As a representation of a finite group, $\rho_{n,1}$ fixes a positive definite form which is unique up to scaling, by Proposition \ref{onlyform}. At $t=1$, $J_{n,1}$ is positive definite, and the signature of $J_{n,t}$ can only change at zeroes of its determinant.

An inductive computation shows that $\det (J_{n,t})= \frac{x^{2n+2}-1}{x^n(x^2-1)}$ for $t\neq1$, and the zeroes of $\det (J_{n,t})$ occur at  $n+1$'th roots of unity. Thus, $J_{n,t}$ remains positive definite for unit complex values of $t$ with argument less than $\frac{2\pi}{n+1}$. This shows the Burau representation satisfies the criteria of Theorem \ref{thmdiscrep}. 
\end{proof}

\begin{example}\label{Burau4}
The Burau representation $\rho_{4,t}$ of $B_4$ is discrete when specializing $t$ to the following Salem numbers:
 \begin{itemize}
 \item Lehmer's number raised to the powers 16, 32 and 47,
 \item The largest real root of $1 - x^4 - x^5 - x^6 + x^{10}$ raised to the powers 17, 23,  and 43.
 \end{itemize}
\end{example}

\section{ The Hecke Algebras and the Jones Representations}\label{sec:hecke}

The goal of this section is to generalizes Squier's result and show that all of the irreducible Jones representations are sesquilinear, as in the following theorem.

\begin{thm}\label{jonesunitary}
If $\rho$ is an irreducible Jones representation of $B_n$ and $q$ is a generic unit complex number close to 1, then there exists a non-degenerate, positive definite,  sesquilinear matrix $J$ so that for all $M$ in the image of $\rho$, $(M^\phi)^\intercal JM=J$.
\end{thm}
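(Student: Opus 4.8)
The plan is to study the Jones representations through the Hecke algebra $H_n(q)$ of type $A$, since the Jones representations factor through it. The key structural input is the theory of Iwahori--Hecke algebras: for generic $q$ (in particular $q$ a unit complex number close to $1$), $H_n(q)$ is semisimple and its irreducible representations are indexed by partitions $\lambda \vdash n$, with explicit matrices given by the Jones normal form or the orthogonal (Hoefsmit) representations. The irreducible Jones representations are exactly these irreducible Hecke summands. So first I would set up the Hecke algebra $H_n(q)$ with generators $g_1,\dots,g_{n-1}$ satisfying the braid relations and the quadratic relation, and recall that the braid generators $\sigma_i$ map to the $g_i$.

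Next I would exploit continuity in the parameter. At $q=1$ the quadratic relation degenerates so that $H_n(1) = \mathbb{C}[S_n]$ and each irreducible Jones representation specializes to an irreducible representation of the symmetric group $S_n$. A finite group representation always preserves a positive definite Hermitian form, unique up to scaling by irreducibility and Schur's lemma (this is the role played by Proposition \ref{onlyform} referenced for Burau). Call this form $J_1$. The strategy then mirrors the Burau argument: I would show the defining matrices $M(g_i)$ depend algebraically (indeed polynomially in $q, q^{-1}$, or in a suitable square root $x$ with $q=x^2$) on the parameter, so that the sesquilinear condition $(M^\phi)^\intercal J M = J$ for an appropriately $q$-dependent form $J=J_q$ becomes a system of polynomial equations. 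I would construct $J_q$ either by directly deforming $J_1$ or by invoking the known fact that the Hoefsmit orthogonal form is preserved by the Hecke generators; positive-definiteness at $q=1$ then persists on a neighborhood of $1$ by continuity of the eigenvalues of the Hermitian matrix $J_q$, since definiteness is an open condition and the signature can only change at zeros of $\det J_q$.

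The cleanest route, which I would prefer, is to produce $J$ explicitly. For each generator $g_i$, the quadratic relation forces eigenvalues among two values; I would diagonalize each $M(g_i)$ and demand that $M(g_i)$ be unitary with respect to $J$, i.e. $J$-self-adjoint up to the eigenvalue phases. Because the Jones representations admit a path basis (Young/Ocneanu or Temperley--Lieb--Jones basis) in which the generators act by explicit matrices whose entries are built from quantum integers $[k]_q$, the natural Hermitian form is diagonal in this basis with entries given by ratios of quantum integers. I would verify that $(M^\phi)^\intercal J M = J$ holds on generators (hence on all of the image, since it is a group condition) and that $J$ is positive definite for $q$ near $1$ because each quantum integer $[k]_q$ is positive and real for $q$ near $1$.

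The main obstacle will be handling the $\phi$-sesquilinearity precisely, i.e.\ checking that the involution $q \mapsto q^{-1}$ (equivalently $x \mapsto x^{-1}$) interacts correctly with the entries of the path-basis matrices so that $J_q = (J_{q^{-1}})^\intercal$ and $J_q^\phi = J_q$ simultaneously hold; the quantum integers satisfy $[k]_{q^{-1}} = [k]_q$ when $q$ is a unit, so I expect the form to be genuinely Hermitian there, but establishing the identity as a formal identity over $\mathbb{Z}[x,x^{-1}]$ (valid for all $q$, not only units) requires care. A secondary subtlety is confirming irreducibility and the uniqueness of the invariant form for each partition $\lambda$, so that the deformed $J_q$ is forced to agree (up to scalar) with the explicit quantum form and inherits positive-definiteness from the $q=1$ specialization.
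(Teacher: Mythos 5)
Your preferred route (the explicit path/Hoefsmit basis with a diagonal form built from quantum integers) is correct, but it is genuinely different from the paper's argument, and is essentially Wenzl's unitarity theorem for Hecke algebra representations. The paper never writes down explicit matrices for the Hecke generators. Instead it first proves (Lemma \ref{equiv}, by induction on $n$ using Jones's branching theorem, Theorem \ref{branching}) that every irreducible representation of $H_n(q)$ is equivalent to its $\phi$-twisted contragredient: the restrictions of $\rho$ and $\tilde{\rho}$ to $H_{n-1}(q)$ have the same irreducible summands by the inductive hypothesis, and a Young diagram is determined by its $(n-1)$-subdiagrams; the base case is Squier's form for Burau together with the two one-dimensional representations. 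It then converts the intertwiner $\mathcal{T}$ into a form via Schur's lemma (Proposition \ref{onlyform}): both $\mathcal{T}$ and $\mathcal{T}^*$ are invariant forms, so $\mathcal{T}=\alpha\mathcal{T}^*$ with $\alpha\alpha^*=1$, and $J=\beta\mathcal{T}+\beta^*\mathcal{T}^*$ is invertible, sesquilinear and invariant for a suitable $\beta$. Positive definiteness is then obtained exactly as you propose: at $q=1$ the representation is a symmetric-group representation whose unique invariant form is definite, and definiteness persists for unit $q$ near $1$. What your route buys: a completely explicit form whose entries are ratios of quantum integers, so positivity near $q=1$ is immediate with no continuity argument. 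What the paper's route buys: it is basis-free, needs only the branching rule plus the Burau seed, and for precisely that reason transfers verbatim to the BMW algebras in Section \ref{sec:BMW} (Theorem \ref{thm:bmwsesq}), where explicit path-basis forms are not available.

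One caution: your fallback strategy --- ``directly deforming $J_1$'' by treating $(M^\phi)^\intercal JM=J$ as a linear system in the entries of $J$ --- is not a proof as it stands. For generic $q$ that system could a priori have only the zero solution; the existence of a nonzero invariant form is exactly equivalent to the self-contragredience $\rho\cong\tilde{\rho}$, which is the content of the paper's Lemma \ref{equiv} and the nontrivial input you would otherwise be importing from Hoefsmit/Wenzl. So you must either carry out the path-basis verification (the ``main obstacle'' you flag) or prove self-contragredience; the uniqueness-up-to-scalar half (Schur) is the easy part. Note also that the theorem as stated concerns only unit complex $q$, where $\phi$ is complex conjugation, so your worry about establishing a formal identity over $\mathbb{Z}[x,x^{-1}]$ is relevant to the later Salem-number specializations rather than to this statement itself.
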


\noindent Then applying Theorem \ref{thmdiscrep} will give the following  discreteness results.



\begin{cor}
For each irreducible Jones representation, there are infinitely many Salem numbers $s$ so that specializing $q=s$,  is a discrete representation.\end{cor}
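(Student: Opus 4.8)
The plan is to deduce this corollary by feeding the output of Theorem \ref{jonesunitary} directly into Theorem \ref{thmdiscrep}, so that the work reduces almost entirely to checking that the invariant form produced for the Jones representations meets the three hypotheses of Theorem \ref{thmdiscrep}. First I would fix an irreducible Jones representation $\rho$ of $B_n$ and, after the change of variable that clears denominators (taking $t$ to be the appropriate power of the Hecke parameter $q$, just as $t=x^2$ clears the square roots in Squier's form), realize $\rho$ as a representation into $GL_m(\mathbb{Z}[t,t^{-1}])$ with the matrix $J$ of Theorem \ref{jonesunitary} expressed as a parameterized form $J_t$.

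Next I would verify the three conditions. Condition (1), that $M^*J_tM=J_t$ for all $M$ in the image, is exactly the sesquilinear relation $(M^\phi)^\intercal JM=J$ supplied by Theorem \ref{jonesunitary}. Condition (2), $J_t=(J_{1/t})^\intercal$, is the statement $J_t^*=J_t$, which is built into the construction of the form in Theorem \ref{jonesunitary}, the form being hermitian with respect to the involution $t\mapsto 1/t$. Both are therefore immediate, and only condition (3) requires genuine argument.

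For condition (3) I would upgrade the positive definiteness of Theorem \ref{jonesunitary}, stated for $q$ a \emph{generic} unit complex number close to $1$, to positive definiteness on an honest arc $\eta$ about $1$ on the unit circle. The key observation is that positive definiteness is an open condition: the eigenvalues of $J_t$ depend continuously on $t$, so the locus where $J_t$ is positive definite is open. Since the non-generic values of $q$, namely the roots of unity at which the Hecke algebra fails to be semisimple, are discrete and accumulate only away from $1$, one can choose a small open arc $\eta$ containing $1$ that is free of excluded points and on which $J_t$ stays positive definite. With such an $\eta$ in hand, Theorem \ref{thmdiscrep} applies verbatim, producing infinitely many Salem numbers whose complex Galois conjugates all lie in $\eta$, and the corresponding specializations are discrete.

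I expect the main obstacle to be precisely this reconciliation of the word ``generic'' in Theorem \ref{jonesunitary} with the full-neighborhood requirement of Theorem \ref{thmdiscrep}: one must confirm that no non-generic parameter value obstructs positive definiteness arbitrarily near $1$, and that the change of variable from $q$ to $t$ respects the Salem property, which it does since every positive power of a Salem number is again a Salem number. Once these bookkeeping points are settled, the discreteness conclusion is a direct citation of Theorem \ref{thmdiscrep}.
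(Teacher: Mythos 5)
Your proposal is correct and takes essentially the same route as the paper: the corollary is obtained precisely by feeding the sesquilinear, hermitian, positive-definite-near-$1$ form of Theorem \ref{jonesunitary} into Theorem \ref{thmdiscrep}, which is all the paper itself offers by way of proof (``applying Theorem \ref{thmdiscrep} will give the following discreteness results''). The bookkeeping you flag is exactly what the paper leaves implicit and handles elsewhere: the ``generic $q$ close to $1$'' versus open-neighborhood issue is resolved by the same continuity-of-eigenvalues argument the paper uses inside the proof of Theorem \ref{jonesunitary} (the finitely many non-semisimple roots of unity for fixed $n$ stay away from $1$), and your change of variables through powers of the parameter is mirrored in the paper's own restatement as Corollary \ref{Jonesdiscrete}, where the specialization is taken at $q=s^m$, legitimate since powers of Salem numbers are Salem numbers.
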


Before proving the theorem, there is a brief introduction to the Hecke algebras and Young diagrams establishing only pertinent information from this rich subject.

\subsection{Representations of the Hecke Algebras and Young Diagrams}\label{sec:heckeandyoung}

\begin{defn}The \textbf{Hecke algebra (of type $A_n$)}, denoted $H_n(q)$, is the complex algebra generated by invertible elements $g_1,\cdots , g_{n-1}$ with relations
\begin{align}
g_ig_{i+1}g_i&=g_{i+1}g_ig_{i+1}  \text{\hspace{2cm}for all $i<n$}\nonumber \\
g_ig_j&=g_jg_i \text{\hspace{3cm}for } |i-j|>1\nonumber \\
g_i^2&=(1-q)g_i+q  \text{\hspace{1.5cm}for all $i<n$.} (*) \label{eq:hecke}
\end{align}

\end{defn}
Here,  $q$ is a complex parameter. $H_n(q)$ is a quotient of $\mathbb{C}[B_n]$ by relation \ref{eq:hecke}. This quotient can be seen as an eigenvalue condition which forces the eigenvalues of the generators to be $q$ and $-1$. In fact, all of the representations of the braid group with two eigenvalues come from representations of the Hecke algebras, see \cite{JONES}. These representations of the braid group are called the \textbf{Jones representations} which are defined by precomposing a representation of $H_n(q)$ by the quotient map from $\mathbb{C}[B_n]$. Notice that there is a standard inclusion of $H_{n-1}(q)$ into $H_n(q)$ by ignoring the last generator. This gives a standard way to restrict a representation of $H_n(q)$ to a representation of $H_{n-1}(q)$, which respects the restriction of $B_n$ to $B_{n-1}$.

The Hecke algebras come equipped with a natural automorphism, denoted here by $\phi$, which sends $q\mapsto \frac{1}{q}$. 
 Taking $q$ to be a unit complex number, this automorphism becomes complex conjugacy. It is easy to see that when $q=1$, $H_n(q)$ is the complex symmetric group $\mathbb{C}[\Sigma_n]$. What is less obvious but well known is that when $q$ is not a root of unity, $H_n(q)$ is isomorphic to $\mathbb{C}[\Sigma_n]$, see \cite{BOU} pages 54-56. One consequence of this isomorphism is that the parameterization of the irreducible representations of $\Sigma_n$ by  Young diagrams also gives a complete parameterization of the irreducible representations of $H_n(q)$. For a more detailed discussion of Young diagrams see \cite{ZHAO}, and \cite{WENZL} for a construction of the Jones Representations. 

\begin{defn}A \textbf{Young diagram} is a finite collection of boxes arranged in left justified rows, with the row sizes weakly decreasing.
\end{defn}

Every  Young diagram contains sub-Young diagrams by removing boxes in a way that retains the weakly decreasing row length condition.  If $\lambda$ is a  Young diagram with $n$ boxes, then we will call the sub-Young diagrams found by removing one box from $\lambda$ the \textbf{$(n-1)$-subdiagrams of $\lambda$.}

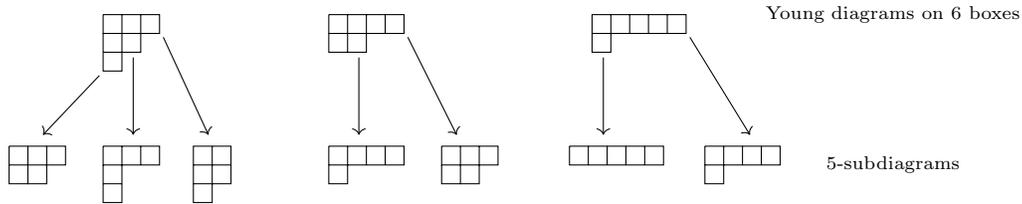
\begin{figure}[h]
\begin{center}
\begin{tikzpicture}
\draw[-](0,3)--(.75,3)--(.75,2.75)--(0,2.75)--(0,3)(.25,3)--(.25,2.25)--(0,2.25)--(0,2.75)(.5,3)--(.5,2.5)--(0,2.5);

\draw[-,xshift=1.2cm,yshift=.75cm](0,0)--(.5,0)--(.5,.5)--(0,.5)--(0,0)(0,0)--(0,-.25)--(.25,-.25)--(.25,.5)(0,.25)--(.5,.25);

\draw[-,yshift=-1.75cm](0,3)--(.75,3)--(.75,2.75)--(0,2.75)--(0,3)(.25,3)--(.25,2.25)--(0,2.25)--(0,2.75)(.5,3)--(.5,2.75)(0,2.5)--(.25,2.5);

\draw[-,yshift=-1.75cm,xshift=-1.25cm](0,3)--(.75,3)--(.75,2.75)--(0,2.75)--(0,3)(.25,3)--(.25,2.5)--(0,2.5)--(0,2.75)(.5,3)--(.5,2.5)--(.25,2.5);

\draw[->](-.05,2.19)--(-.8,1.4);
\draw[->](.4,2.43)--(.4,1.4);
\draw[->] (.8,2.7)--(1.4,1.4);

\draw[-,xshift=3cm](0,3)--(.5,3)--(.5,2.5)--(0,2.5)--(0,3)(.25,3)--(.25,2.5)(0,2.75)--(.5,2.75)(.5,3)--(1,3)--(1,2.75)--(.5,2.75)(.75,3)--(.75,2.75);
\draw[->,xshift=3cm](.4,2.43)--(.4,1.4);
\draw[-,xshift=3cm,yshift=-1.75cm](0,3)--(1,3)--(1,2.75)--(0,2.75)(0,3)--(0,2.5)--(.25,2.5)--(.25,3)(.5,3)--(.5,2.75)(.75,3)--(.75,2.75);
\draw[-,yshift=-1.75cm,xshift=4.5cm](0,3)--(.75,3)--(.75,2.75)--(0,2.75)--(0,3)(.25,3)--(.25,2.5)--(0,2.5)--(0,2.75)(.5,3)--(.5,2.5)--(.25,2.5);
\draw[->,xshift=3.25cm] (.8,2.7)--(1.45,1.4);

\draw[-,xshift=6.5cm](0,3)--(1.25,3)--(1.25,2.75)--(0,2.75)(0,3)--(0,2.5)--(.25,2.5)--(.25,3)(.5,3)--(.5,2.75)(.75,3)--(.75,2.75)(1,3)--(1,2.75);

\draw[-,xshift=8cm,yshift=-1.75cm](0,3)--(1,3)--(1,2.75)--(0,2.75)(0,3)--(0,2.5)--(.25,2.5)--(.25,3)(.5,3)--(.5,2.75)(.75,3)--(.75,2.75);
\draw[->,xshift=6.25cm](.4,2.43)--(.4,1.4);
\draw[-,xshift=6.2cm,yshift=-1.75cm](0,3)--(1.25,3)--(1.25,2.75)--(0,2.75)--(0,3)(.25,3)--(.25,2.75)(.5,3)--(.5,2.75)(.75,3)--(.75,2.75)(1,3)--(1,2.75);
\draw[->,xshift=7cm] (.8,2.7)--(1.6,1.4);
\node[]at (10.5,1){\tiny{5-subdiagrams}};
\node[]at (10.5,3){\tiny{Young diagrams on 6 boxes}};
\end{tikzpicture}
\caption{Example 5-subdiagrams of three different  Young diagrams with 6 boxes.} \label{subdiagrams}
\end{center}
\end{figure}

 A Young diagram is completely determined by its list of $(n-1)$-subdiagrams. In fact, a Young diagram is completely determined by any two of its $(n-1)$-subdiagrams.  To see this, stack any two $(n-1)$-subdiagrams atop each other top left aligned. Each $(n-1)$-subdiagram will contain the missing box from the other $(n-1)$-subdiagram, recovering the original Young diagram. Notice that each pair of the Young diagrams in Figure \ref{subdiagrams} have one 5-subdiagram in common and it is also possible for two different Young diagrams to have the same number of $(n-1)$-subdiagrams.  These $(n-1)$-subdiagrams  also determine representations of the Hecke algebras in a powerful way. The following theorem, due to Jones in \cite{JONES}, states concretely the relationship between Young diagrams and the representations of the Hecke algebras.

\begin{thm}\label{branching}
Up to equivalence, the finite dimensional irreducible representations of $H_n(q)$, for generic $q$, are in one to one correspondence with the  Young diagrams of $n$ boxes. Moreover, if $\rho$ is a representation corresponding to  Young diagram $\lambda$, then $\rho$ restricted to $H_{n-1}(q)$ is equivalent to the representation $\bigoplus_{i=1}^k\rho_{\lambda_i}$ where $\lambda_1,\cdots, \lambda_k$ are all of the $(n-1)$-subdiagrams of $\lambda$ and each $\rho_{\lambda_i}$ is an irreducible representation of $H_{n-1}(q)$ corresponding to $\lambda_i$.
\end{thm}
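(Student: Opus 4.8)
The plan is to leverage the isomorphism $H_n(q)\cong \mathbb{C}[\Sigma_n]$ for generic $q$ (cited from \cite{BOU}) together with a deformation argument, so that both the classification of irreducibles and the branching rule are transported from the symmetric group to the Hecke algebra. First I would regard $H_n$ as an algebra over a suitable localization of $\mathbb{C}[q,q^{-1}]$ in which $q$ is invertible and the values where semisimplicity fails (roots of unity) are excluded; over this base $H_n$ is free of rank $n!$ with the standard basis indexed by $\Sigma_n$. At a generic complex value $q_0$ the specialization is split semisimple of dimension $n!$, and the specialization at $q=1$ recovers $\mathbb{C}[\Sigma_n]$. Because a split semisimple algebra is determined up to isomorphism by the multiset of dimensions of its simple modules, and these are matched across the semisimple locus by the Tits deformation theorem, the irreducibles of $H_n(q)$ are in bijection with those of $\mathbb{C}[\Sigma_n]$, namely the Specht modules $S^\lambda$ indexed by Young diagrams $\lambda$ of $n$ boxes. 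This yields the first assertion.

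For the branching rule, the essential observation is that the standard inclusion $H_{n-1}(q)\hookrightarrow H_n(q)$ (ignoring the last generator $g_{n-1}$) deforms the inclusion $\mathbb{C}[\Sigma_{n-1}]\hookrightarrow \mathbb{C}[\Sigma_n]$ compatibly with the chosen bases. The classical branching rule states that $\operatorname{Res}^{\Sigma_n}_{\Sigma_{n-1}} S^\lambda \cong \bigoplus_\mu S^\mu$, the sum over diagrams $\mu$ obtained from $\lambda$ by deleting one box, each with multiplicity one. Since the restriction functor and its effect on the class in the representation ring (equivalently, the Bratteli diagram of the tower, which is Young's lattice) are invariant under the deformation on the semisimple locus, the same decomposition holds for $H_n(q)$, and the diagrams obtained by removing one box are exactly the $(n-1)$-subdiagrams of $\lambda$.

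Alternatively --- and this is the route that makes branching completely transparent --- I would build the representation $\rho_\lambda$ explicitly on the vector space spanned by standard Young tableaux of shape $\lambda$, defining the action of each $g_i$ by seminormal matrices whose entries are rational functions of $q$ (see \cite{WENZL}), and verify the Hecke relations \eqref{eq:hecke} directly. In this model, restricting to $H_{n-1}(q)$ forgets the position of the box labeled $n$, so the tableaux partition according to which $(n-1)$-subdiagram they fill, and the restriction manifestly splits as the claimed direct sum indexed by the $(n-1)$-subdiagrams of $\lambda$.

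The main obstacle is the deformation step: one must ensure that the multiplicities in the restriction do not jump as $q$ varies, i.e. that the decomposition matrix is genuinely constant over the semisimple locus rather than merely matching at the two endpoints $q=1$ and generic $q_0$. This is exactly what the Tits deformation theorem provides once semisimplicity and freeness over the base are in hand, so the real content is verifying those hypotheses; the explicit tableau construction sidesteps the issue entirely, at the cost of a direct check of the relations \eqref{eq:hecke}.
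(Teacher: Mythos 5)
The paper offers no proof of this statement: Theorem \ref{branching} is imported as a known result, attributed to Jones \cite{JONES} (whose own treatment rests on Hoefsmit's explicit construction, the one reproduced in \cite{WENZL}), so there is no internal argument to compare yours against. Your proposal is a correct reconstruction of how the result is actually proved in that literature, and both of your routes are viable. Two remarks on the deformation route. First, the assertion that ``a split semisimple algebra is determined up to isomorphism by the multiset of dimensions of its simple modules'' cannot by itself produce the labeling of irreducibles by Young diagrams, since distinct diagrams can carry equal dimensions (for $\Sigma_4$, the partitions $(3,1)$ and $(2,1,1)$ both give dimension $3$); what you actually need, and do invoke, is the canonical bijection of irreducible characters under specialization furnished by the Tits deformation theorem. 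Second, the same mechanism handles branching: the restriction multiplicities are integer-valued functions of $q$ that vary algebraically over the semisimple locus (the complement of finitely many roots of unity, which is connected), hence are constant and equal to their values at $q=1$, where the classical rule for $\Sigma_n$ applies; you correctly isolate this constancy, together with generic semisimplicity (nonvanishing of the discriminant of the symmetrizing trace form, which is nonzero at $q=1$), as the real content to be checked. The seminormal tableau model sidesteps all of this and makes the branching rule true by construction, at the cost of verifying the relations \eqref{eq:hecke} entry by entry; that is in effect the proof underlying the paper's citation. Either route, written out in full, would be a complete proof of the theorem the paper merely quotes.
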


Here equivalence means the existence of an intertwining isomorphism made precise by the following definition.
\begin{defn}\label{def:equivalentreps}
$\varphi:G\rightarrow GL(V)$ and $\psi:G\rightarrow GL(W)$ are said to be \textbf{equivalent} representations if there exists a linear isomorphism $T:V\rightarrow W$ so that $T\varphi(g)(v)=\psi(g)T(v)$ for all $g\in G$ and $v\in V$, or that the following diagram commutes.
\begin{center}
\begin{tikzcd}
V \arrow[r, "\varphi(g)"] \arrow[d, "T"]
& V \arrow[d, "T"] \\
W \arrow[r, "\psi(g)"]
& W
\end{tikzcd}
\end{center}
\end{defn}
\noindent Choosing bases for $V$ and $W$, the equivalence $T$ gives the matrix equation
$$[T][\varphi(g)][T]^{-1}=[\psi(g)].$$
At the level of matrices, representations are equivalent exactly when they are simultaneously conjugate. In the context of Theorem \ref{branching}, the restriction of $\rho$ to $H_{n-1}(q)$ is equivalent to the representation $\bigoplus_{i=1}^k\rho_{\lambda_i}$, which means there is a change of basis so that the restriction of $\rho$ is block diagonal.

These restriction rules are combinatorially depicted in the lattice of Young diagrams shown in Figure \ref{fig:YoungLattice}. The lines drawn between diagrams in different rows connect the diagrams with $n$ boxes to all of their $(n-1)$-subdiagrams.

\begin{rem}The lattice of Young diagrams has a chain of diagrams with two columns and only one block in the second column, \begin{tikzpicture}[baseline=.2cm]
\draw[-](0,0)--(.2,0)--(.2,.6)--(0,.6)--(0,0)(0,.4)--(.4,.4)--(.4,.6)--(.2,.6);
\node[]at (.1,.3){\tiny$\vdots$};
\end{tikzpicture}. The representations corresponding to these diagrams are the Burau representations. There is a natural symmetry of the lattice of Young diagrams, so depending on the choice of convention, one could define the Burau representations as the diagrams with exactly two rows, and one box in the second row. The Burau representations are shown in red in Figure    \ref{fig:YoungLattice}.
\end{rem}

\begin{figure}[h]
\begin{center}
\begin{tikzpicture}
\draw[-](0,5)--(.25,5)--(.25,5.25)--(0,5.25)--(0,5);
\draw[-](.15,4.9)--(.15,4.6);
\draw[-] (0,4)--(.25,4)--(.25,4.5)--(0,4.5)--(0,4)(0,4.25)--(.25,4.25);
\draw[-](.32,5.1)--(.78,4.6);
\draw[-,red] (.75,4.25)--(1.25,4.25)--(1.25,4.5)--(.75,4.5)--(.75,4.25)(1,4.24)--(1,4.5);
\draw[-](.15,3.9)--(.15,3.6);
\draw[-](.32,4.1)--(.78,3.6);
\draw[](0,3.5)--(.25,3.5)--(.25,2.75)--(0,2.75)--(0,3.5)(0,3)--(.25,3)(0,3.25)--(.25,3.25);
\draw[-] (1,4.1)--(1,3.6);
\draw[-](1.4,4.2)--(2,3.6);
\draw[red] (.75,3.5)--(1.25,3.5)--(1.25,3.25)--(.75,3.25)--(.75,3.5)(1,3.5)--(1,3)--(.75,3)--(.75,3.25);
\draw[] (1.75, 3.5)--(2.5,3.5)--(2.5,3.25)--(1.75,3.25)--(1.75,3.5)(2,3.5)--(2,3.25)(2.25,3.5)--(2.25,3.25);
\begin{scope}[yshift=.25cm]
\draw[yshift=-2.5cm,-](.15,4.9)--(.15,4.6);
\draw[yshift=-1.5cm,-](.32,4.1)--(.78,3.6);
\draw[](0,2)--(.25,2)--(.25,1)--(0,1)--(0,2)(0,1.25)--(.25,1.25)(0,1.5)--(.25,1.5)(0,1.75)--(.25,1.75);
\draw[yshift=-1.5cm,-] (1,4.1)--(1,3.6);
\draw[xshift=-.1cm, yshift=-1.5cm,-](1.4,4.2)--(2,3.6);
\draw[xshift=-.1cm, yshift=-1.5cm,-](1.45,4.45)--(2.75,3.6);
\draw[red] (.75,2)--(1.25,2)--(1.25,1.75)--(.75,1.75)--(.75,2)(1,2)--(1,1.25)--(.75,1.25)--(.75,1.75)(.75,1.5)--(1,1.5);
\draw[] (1.75, 2)--(2.25,2)--(2.25,1.5)--(1.75,1.5)--(1.75,2)(2,2)--(2,1.5)(1.75,1.75)--(2.25,1.75);
\draw[xshift=-.2cm] (3, 2)--(3.75,2)--(3.75,1.75)--(3,1.75)--(3,2)(3.25,2)--(3.25,1.75)(3.5,2)--(3.5,1.75) (3,2)--(3,1.5)--(3.25,1.5)--(3.25,1.75);
\draw[xshift=1.2cm, yshift=-1.5cm,-](1.25,4.4)--(2,3.6);
\draw[xshift=1.2cm, yshift=-1.5cm,-](1.45,4.45)--(2.75,3.6);
\draw[xshift=1cm] (3, 2)--(3.75,2)--(3.75,1.75)--(3,1.75)--(3,2)(3.25,2)--(3.25,1.75)(3.5,2)--(3.5,1.75)(3.75,2)--(4,2)--(4,1.75)--(3.75,1.75);
\node[red] at(1,.8){$\vdots$};
\node[] at (3.5,1.25){$\ddots$};
\end{scope}
\end{tikzpicture}
\caption{Lattice of Young diagrams encoding the restriction rules for the irreducible representations of the Hecke algebras. The Burau representations are shown in red.} \label{fig:YoungLattice}
\end{center}
\end{figure}

\section{Sesqulinear Representations and Contragredients}



A representation is sesquilinear if there exists an invertible  matrix $J$ so that for every $M$ in the image of the representation, the following equation is satisfied
\begin{align}M^*JM=J.\label{eqsesq}\end{align}
Rearranging this equation, we see that $M=J^{-1}((M^\phi)^\intercal )^{-1}J$ showing that $M$ and $((M^\phi)^\intercal )^{-1}$ are simultaneously conjugate. Changing views slightly, consider the following definition.
\begin{defn}
For $\varphi:G\rightarrow GL(V)$ a complex linear representation,  $\tilde{\varphi}:G\rightarrow GL(V^*)$ is called the \textbf{$\phi$-twisted contragredient representation} of $\varphi$ and is given by $\tilde{\varphi}(g)f(v)=f(\varphi(g^{-1})^\phi v)$, for every $g\in G$,$v\in V$ and $f\in V^*$.
\end{defn}

If a basis for $V$ is chosen, then as matrices, $[\tilde{\varphi}(g)]=([\varphi(g)^\phi]^\intercal )^{-1}$. So another way to view a sesquilinear representation is one that is \textit{equivalent to its $\phi$-twisted contragredient}. The reason for using the $\phi$-twisting in addition to the contragrediant is to preserve the character of the representation. For example, the Jones representations have eigenvalues $-1$ and $q$, and the contragredients representations have eigenvalues $-1$ and $\frac{1}{q}$. The involution $\phi$ is necessary to return the $\frac{1}{q}$ eigenvalue back to a ${q}$. 

This viewpoint combined with the following proposition gives a crucial perspective for the proof of Theorem \ref{jonesunitary}.


\begin{prop}\label{onlyform}
 If an absolutely irreducible matrix representation has an invertible matrix $J$ satisfying equation \ref{eqsesq}, then $J$ is unique up to scaling.
\end{prop}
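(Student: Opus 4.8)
The plan is to prove uniqueness up to scaling by a standard application of Schur's Lemma, exploiting the sesquilinear equation to turn ``fixing the form $J$'' into an intertwining condition between $\varphi$ and its $\phi$-twisted contragredient $\tilde\varphi$. First I would suppose that $J_1$ and $J_2$ are two invertible matrices both satisfying equation \ref{eqsesq}, so that $M^*J_iM=J_i$ for $i=1,2$ and every $M=\varphi(g)$ in the image. Rearranging as in the discussion preceding the statement, each $J_i$ realizes the simultaneous conjugacy $[\varphi(g)]=J_i^{-1}([\varphi(g)^\phi]^\intercal)^{-1}J_i$; equivalently, in the contragredient language, each $J_i$ is an intertwiner from $\varphi$ to $\tilde\varphi$, i.e.\ $J_i\varphi(g)=\tilde\varphi(g)J_i$ for all $g$.

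The key step is then to consider the composite $T:=J_1^{-1}J_2$ and show it intertwines $\varphi$ with itself. Concretely, from $[\varphi(g)]=J_i^{-1}([\varphi(g)^\phi]^\intercal)^{-1}J_i$ for both $i=1,2$, I would equate the two expressions for $([\varphi(g)^\phi]^\intercal)^{-1}$ to get $J_1\varphi(g)J_1^{-1}=J_2\varphi(g)J_2^{-1}$, which rearranges to $(J_1^{-1}J_2)\varphi(g)=\varphi(g)(J_1^{-1}J_2)$ for every $g\in G$. Thus $T=J_1^{-1}J_2$ commutes with every matrix in the image of the absolutely irreducible representation $\varphi$. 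By Schur's Lemma for absolutely irreducible representations, the commutant consists only of scalar matrices, so $T=\lambda\,\mathrm{Id}$ for some nonzero scalar $\lambda$. Hence $J_2=\lambda J_1$, proving uniqueness up to scaling.

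The main obstacle, and the point that genuinely uses the hypothesis of \emph{absolute} irreducibility, is the invocation of Schur's Lemma in the form ``the commutant is exactly the scalars.'' Over a non-algebraically-closed or merely irreducible (as opposed to absolutely irreducible) situation the commutant could be a larger division algebra, and then $T$ need not be scalar. I would therefore be careful to state that absolute irreducibility guarantees $\mathrm{End}_G(V)=\mathbb{C}\cdot\mathrm{Id}$, so that any endomorphism commuting with the whole image is forced to be a scalar. The only other point requiring a word of care is that $T$ is invertible (being a product of invertible matrices), so that $\lambda\neq 0$ and $J_2=\lambda J_1$ is a genuine rescaling rather than a degeneration; this is immediate since $J_1,J_2\in GL_m$.
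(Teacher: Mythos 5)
Your proof is correct and follows essentially the same route as the paper: both form $T=J_1^{-1}J_2$, show it centralizes the image of the representation, and invoke Schur's Lemma (via absolute irreducibility) to conclude $J_2=\lambda J_1$. Your explicit remark that absolute irreducibility is what forces the commutant to be exactly the scalars is a point the paper leaves implicit, but the argument is the same.
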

\begin{proof}
Suppose there were two such matrices $J_1$ and $J_2$. Then equation \ref{eqsesq} gives for all matrices $M$ in the representation,
\begin{align*}J_1MJ_1^{-1}=((M^\phi)^\intercal )^{-1}=&J_2MJ_2^{-1}\\
\Rightarrow\text{\hspace{.5cm}} (J^{-1}_1J_2)^{-1}M(J^{-1}_1J_2)=&M.
\end{align*}

This shows that $J_1^{-1}J_2$ is in the centralizer of the entire irreducible representation. Schur's Lemma gives that $J_1^{-1}J_2=\alpha \cdot$Id for some scalar $\alpha$, and finally $J_2=\alpha J_1$.
\end{proof}



\subsection{Proof of Theorem \ref{jonesunitary}}
\begin{lem}\label{equiv}
Every finite dimensional irreducible representation of the Hecke algebra is equivalent to its  $\phi$-twisted contragediant representation, when $q$ is a generic complex number.
\end{lem}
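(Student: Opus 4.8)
The plan is to induct on $n$, combining the branching rule of Theorem \ref{branching} with the fact recorded above that a Young diagram is determined by its $(n-1)$-subdiagrams. Write $\lambda$ for the Young diagram corresponding to the given irreducible representation $\varphi$ of $H_n(q)$. First I would check that the $\phi$-twisted contragredient $\tilde\varphi$ is again an irreducible Hecke representation with the same generator-eigenvalues. Since $[\tilde\varphi(g)]=([\varphi(g)^\phi]^\intercal)^{-1}$, passing from $\varphi(g_i)$ to $\tilde\varphi(g_i)$ replaces each eigenvalue $\alpha$ of $\varphi(g_i)$ by $(\alpha^\phi)^{-1}$; as $q\mapsto(q^\phi)^{-1}=q$ and $-1\mapsto((-1)^\phi)^{-1}=-1$, the eigenvalue set $\{q,-1\}$ is preserved. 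This is exactly the purpose of the $\phi$-twist, and it guarantees that $\tilde\varphi$ factors through $H_n(q)$; being irreducible, it corresponds to some Young diagram $\mu$ of $n$ boxes, and the goal becomes showing $\mu=\lambda$.

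The base case $n=1$ is immediate, the unique one-dimensional representation being self-dual by the eigenvalue computation above. For the inductive step, the key structural observation is that the $\phi$-twisted contragredient commutes with restriction: because $[\tilde\varphi(g)]=([\varphi(g)^\phi]^\intercal)^{-1}$ is formed entrywise and the inclusion $H_{n-1}(q)\hookrightarrow H_n(q)$ carries generators to generators, one has $\tilde\varphi|_{H_{n-1}(q)}=\widetilde{\varphi|_{H_{n-1}(q)}}$. Applying Theorem \ref{branching} to $\varphi$ and then the inductive hypothesis to each summand gives
\[
\tilde\varphi|_{H_{n-1}(q)}=\widetilde{\bigoplus_{i=1}^k\varphi_{\lambda_i}}=\bigoplus_{i=1}^k\widetilde{\varphi_{\lambda_i}}\cong\bigoplus_{i=1}^k\varphi_{\lambda_i},
\]
where $\lambda_1,\dots,\lambda_k$ are the $(n-1)$-subdiagrams of $\lambda$. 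On the other hand, Theorem \ref{branching} applied to $\tilde\varphi$ itself decomposes $\tilde\varphi|_{H_{n-1}(q)}$ along the $(n-1)$-subdiagrams $\mu_1,\dots,\mu_\ell$ of $\mu$.

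Comparing the two decompositions finishes the argument: the $(n-1)$-subdiagrams of a fixed diagram are pairwise distinct, so every irreducible constituent occurs with multiplicity one, and genericity of $q$ makes $H_{n-1}(q)$ semisimple, whence the decomposition into irreducibles is unique. Hence the sets $\{\lambda_i\}$ and $\{\mu_j\}$ coincide, meaning $\lambda$ and $\mu$ have the same $(n-1)$-subdiagrams; since a Young diagram is determined by its subdiagrams, $\mu=\lambda$ and therefore $\tilde\varphi\cong\varphi$.

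The step I expect to demand the most care is the eigenvalue bookkeeping that keeps $\tilde\varphi$ inside the family of Hecke representations indexed by Young diagrams, since without it the induction cannot close; this is where the $\phi$-twist (rather than the bare contragredient) is essential. The remaining ingredients — that genericity of $q$ supplies both the semisimplicity needed for uniqueness of the decomposition and the hypotheses of Theorem \ref{branching}, and that the entrywise operation $M\mapsto((M^\phi)^\intercal)^{-1}$ visibly respects the subalgebra inclusion — are routine but should be stated explicitly so that the inductive comparison of restrictions is unambiguous.
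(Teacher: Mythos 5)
Your overall strategy is the same as the paper's: induct on $n$ using Theorem \ref{branching}, the compatibility of the $\phi$-twisted contragredient with restriction, and the fact that a Young diagram is recovered from its list of $(n-1)$-subdiagrams. Much of your write-up is actually more explicit than the paper's proof of Lemma \ref{equiv} --- in particular your eigenvalue bookkeeping showing that $\tilde\varphi$ again factors through $H_n(q)$, and your appeal to semisimplicity and multiplicity-one to compare the two decompositions of $\tilde\varphi|_{H_{n-1}(q)}$. However, your induction has a genuine gap at its very first step: the combinatorial fact you invoke, that a Young diagram is determined by its $(n-1)$-subdiagrams, is \emph{false} for $n=2$. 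The two diagrams with two boxes (a single row and a single column) have the same unique $1$-subdiagram, namely the single box, yet they index non-equivalent representations $g_1\mapsto q$ and $g_1\mapsto -1$. Equivalently, restriction to $H_1(q)\cong\mathbb{C}$ carries no information at all, so comparing restrictions cannot distinguish $\lambda$ from $\mu$ when $n=2$, and the passage from your base case $n=1$ to $n=2$ collapses.

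The repair is easy and uses only material you already wrote down: take $n=2$ as the base case and verify it directly via your eigenvalue computation --- the one-dimensional representations $g_1\mapsto q$ and $g_1\mapsto -1$ are literally equal to their $\phi$-twisted contragredients, since $(q^\phi)^{-1}=q$ and $((-1)^\phi)^{-1}=-1$. With that base case, the inductive step only ever invokes the ``determined by its subdiagrams'' fact for diagrams with $n\geq 3$ boxes, where it does hold (the stacking argument requires two distinct subdiagrams; rectangular diagrams have only one, but for $n\geq 3$ a rectangle is still recovered from that single subdiagram). For comparison, the paper sidesteps the issue by starting the induction at $n=3$, checking the two one-dimensional representations of $H_3(q)$ directly and identifying the third irreducible with the Burau representation of $B_3$, which is sesquilinear by Squier's theorem; your route avoids citing Squier but must anchor the induction at $n=2$ rather than $n=1$.
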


\begin{proof} We can establish this result for $n=3$. There are three non-equivalent irreducible representations of $H_3(q)$ corresponding to the following Young diagrams.
\begin{center}
\begin{tikzpicture}
\draw[-](0,0)--(.75,0)--(.75,.25)--(0,.25)--(0,0);
\draw[-](.25,0)--(.25,.25)(.5,0)--(.5,.25);

\draw[-,xshift=3.5cm](0,0)--(.5,0)--(.5,.25)--(0,.25)--(0,-.25)--(.25,-.25)--(.25,.25);

\draw[-,xshift=2cm,yshift=.2cm](0,.25)--(.25,.25)--(.25,-.5)--(0,-.5)--(0,.25)(0,0)--(.25,0)(0,-.25)--(.25,-.25);
\end{tikzpicture}
\end{center}

Up to equivalence, the first two representations are one dimensional given by $g_i\mapsto q$ and $g_i\mapsto -1$, which are in fact equal to their $\phi$-twisted contragredient representations. The third representation is known to be the Burau representation for $B_3$. As described earlier, Squier showed that the Burau representations are sesquilinear and are therefore equivalent to their $\phi$-twisted contragediant. 


Inductively moving forward, let $\rho:H_n(q)\rightarrow GL(V)$ be a finite dimensional irreducible representation and $\tilde{\rho}$ be the $\phi$-twisted  contragredient representation of $\rho$. Up to equivalence, $\rho$ corresponds to a  Young diagram $\lambda$.  To show that $\rho$ and $\tilde{\rho}$ are equivalent, it suffices to show that both representations correspond to the same $\lambda$. A  Young diagram is completely characterized by its list of $(n-1)$-subdiagrams, which correspond to the restriction of the representation to $H_{n-1}(q)$. So it is enough to show that the restrictions of $\rho$ and $\tilde{\rho}$ correspond to the same list of $(n-1)$-subdiagrams.

Denoting $\rho|=\rho|_{H_{n-1}(q)}$, by Theorem \ref{branching} there is an equivalence $T$ so that 
$$T\rho|(h)T^{-1}=\bigoplus_{i=1}^k\rho_{\lambda_i}(h) \text{\hspace{5mm} for every $h\in H_{n-1}(q)$},$$ where each $\lambda_i$ is an $(n-1)$-subdiagram of $\lambda$, $k$ is the number of $(n-1)$-subdiagrams  of $\lambda$, and $\rho_{\lambda_i}$ is an irreducible representation of $H_{n-1}(q)$ corresponding to $\lambda_i$. Choosing a basis for $V$, the matrix for  $[T\rho|(h)T^{-1}]$ is block diagonal. Taking the $\phi$-twisted contragredient of a block diagonal matrix preserves the block decomposition, which gives 
$$([T^\phi]^\intercal )^{-1}[\tilde{\rho}|(h)][T^\phi]^\intercal =\bigoplus_{i=1}^k[\tilde{\rho}_{\lambda_i}(h)]\text{ for every } h\in H_{n-1}(q).$$

This equation shows that $\tilde{\rho}|$ is equivalent to $\bigoplus \tilde{\rho}_{\lambda_i}$. Since each $\rho_{\lambda_i}$ is an irreducible representation of $H_{n-1}(q)$, we can inductively assume that $\rho_{\lambda_i}$ is equivalent to $\tilde{\rho}_{\lambda_i}$, for all $i\leq k$. Therefore, $\rho_{\lambda_i}$ and $\tilde{\rho}_{\lambda_i}$ correspond to the same Young diagram $\lambda_i$.  Thus the restrictions of $\rho$ and $\tilde{\rho}$ correspond to the same list of $(n-1)$-subdiagrams.

\end{proof}

\noindent \textbf{Theorem} \ref{jonesunitary}. \textit{
If $\rho$ is an irreducible Jones representation of $B_n$ and $q$ is a generic unit complex number close to 1, then there exists a non-degenerate, positive definite,  sesquilinear matrix $J$ so that for all $M$ in the image of $\rho$, $(M^\phi)^\intercal JM=J$.}

\begin{proof}
Let $\rho$ be a finite dimensional irreducible representation of $H_n(q)$ over $V$. By Lemma \ref{equiv}, $\rho$ is equivalent to its $\phi$-twisted contragredient representation $\tilde{\rho}$ by an equivalence $T$. Choose a basis for $V$ and its dual basis for $V^*$, let $ \mathcal{T}$ be the matrix for $T$ with respect to these bases.  We will use this matrix $\mathcal{T}$ to find the desired matrix $J$. Let superscript $*$ denote the $\phi$-twisted transpose of a matrix  to ease computation. For all $g\in H_n(q)$, we get the following matrix equations. \begin{align*}
 \mathcal{T}[\rho(g)] \mathcal{T}^{-1}&=[\tilde{\rho}(g)]=([\rho(g)]^{-1})^*\\
\Rightarrow \text{\hspace{.35cm}}( \mathcal{T}^{-1})^*[\rho(g)]^* \mathcal{T}^* &=[\rho(g)]^{-1}\text{\hspace{2cm} ($\ddagger$)}\\
\Rightarrow  \text{\hspace{.5cm}}\mathcal{T}^*[\rho(g)]( \mathcal{T}^*)^{-1}&=([\rho(g)]^{-1})^*
\end{align*}

This shows that $\mathcal{T}$ and $\mathcal{T}^*$ are two possible forms for $\rho$. By Proposition \ref{onlyform}, $ \mathcal{T}=\alpha  \mathcal{T}^*$ for some $\alpha\in \C$. Applying $*$ again gives $\mathcal{T}=\alpha \alpha^* \mathcal{T}$ and $\alpha\alpha^*=1$. 


Define $J=\beta\mathcal{T}+\beta^*\mathcal{T}^*=(\alpha\beta+\beta^*)\mathcal{T}^*$ where $\beta$ is as follows. (The need for $\beta$ is to ensure that $J$ is invertible.) If $\alpha \neq -1$, let $\beta=1$ which gives that $\det J=\det((\alpha+1)\mathcal{T})$ which is nonzero.  If $\alpha=-1$, let $\beta\in \mathbb{C}$ so that $\beta^*\neq \beta$. Then $\det J=\det [(\alpha\beta+\beta^*)\mathcal{T}^*]=\det [(-\beta+\beta^*)\mathcal{T}]$ is nonzero. So in both cases, $J$ is invertible.


Secondly, $J$ is sesqulinear, that is $J^*=(\beta\mathcal{T}+\beta^*\mathcal{T}^*)^*=\beta^*\mathcal{T}^*+\beta\mathcal{T}=J$. If $M$ is a matrix in the image of $\rho$, rearranging equation ($\ddagger)$ gives $M^*\mathcal{T}^*M=\mathcal{T}.$ So, inserting $J$ gives$$M^*JM=M^*(\alpha\beta+\beta^*)\mathcal{T}^*M=(\alpha\beta+\beta^*)M^*\mathcal{T}^*M=(\alpha\beta+\beta^*)\mathcal{T}=J.$$

It remains to show that $J$ is positive definite. Taking $q=1$, $\rho$ is an irreducible representation of the symmetric group $\Sigma_n$. As a linear representation of a finite group, $V$ admits an inner product that is invariant under the action of $\Sigma_n$, given by a positive definite nondegenerate matrix $\hat{J}$. Proposition \ref{onlyform} guarantees that $\hat{J}$ is unique up to scaling. Since $J|_{q=1}$ is also a form for this representation, it must be that $\hat{J}$ is a multiple of $J|_{q=1}$, which gives that $J$ is positive definite for $q=1$. Since $J$ is Hermitian for unit complex $q$, it has real eigenvalues, and continuity of the determinant map finally gives that either $J$ or $-J$ is positive definite for $q$ close to 1.

\end{proof}

\begin{cor}\label{Jonesdiscrete}
For each irreducible Jones representation, there are infinitely many Salem numbers $s$ so that specializing $q=s^m$, for some $m$,  is a discrete representation.\end{cor}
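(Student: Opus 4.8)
The plan is to deduce the corollary directly from Theorem \ref{thmdiscrep}, using Theorem \ref{jonesunitary} to supply almost all of the required input. Fix an irreducible Jones representation $\rho$ of $B_n$, corresponding to a Young diagram $\lambda$. By Theorem \ref{jonesunitary}, for generic unit complex $q$ near $1$ there is a non-degenerate sesquilinear matrix $J$ with $(M^\phi)^\intercal J M = J$ for every $M$ in the image, with $J$ or $-J$ positive definite for $q$ close to $1$. The relation $(M^\phi)^\intercal J M = J$ is exactly $M^* J M = J$, hypothesis (1) of Theorem \ref{thmdiscrep}; sesquilinearity $J^* = J$ is precisely $J_q = (J_{1/q})^\intercal$, hypothesis (2); and positive definiteness of $\pm J$ on a neighborhood $\eta$ of $q=1$ is hypothesis (3) (replacing $J$ by $-J$ if needed changes none of the preceding relations). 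Thus the only condition of Theorem \ref{thmdiscrep} not already granted by Theorem \ref{jonesunitary} is that $\rho$ take values in $GL_m(\Z[q,q^{-1}])$.

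This integrality is the crux, and the step I expect to be the main obstacle. The discreteness engine behind Theorem \ref{thmdiscrep}, namely Theorem \ref{SUdiscrete}, depends essentially on the specialized matrix entries being algebraic integers: its finiteness conclusion uses that there are only finitely many algebraic integers of bounded degree all of whose conjugates are bounded. In the seminormal (Young--Hoefsmit) presentation the matrices $\rho(g_i)$ have entries that are rational functions of $q$ with denominators built from quantum integers, so a naive specialization at a Salem number $s$ lands in $\Q(s)$ but not in $\mathcal{O}_{\Q(s)}$, and the argument breaks. I would circumvent this by replacing $\rho$ with an equivalent integral model: the irreducible representations of $H_n(q)$ at generic $q$ are realized on Specht (cell) modules defined over $\Z[q,q^{-1}]$, on which each generator $g_i$ acts by a matrix with Laurent-polynomial entries. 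Such a model is irreducible and attached to the same diagram $\lambda$, hence is again the Jones representation for $\lambda$, so Theorem \ref{jonesunitary} applies to it verbatim and produces the form $J$ above. Note that $J$ itself need not be integral: the discreteness argument constrains only the group elements $M$, not the fixed form.

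With an integral model in hand, the conclusion is the application of Theorem \ref{thmdiscrep}, which I recall for completeness. By Lemma \ref{posreal}, for any Salem number $s$ there are infinitely many exponents $m$ for which every complex Galois conjugate of the Salem number $s^m$ lies in $\eta$. Specializing $q = s^m$ yields $\rho_{s^m}\colon B_n \to U_m(J_{s^m}, \phi, \mathcal{O}_{\Q(s^m)})$ with $\phi$ the involution $s^m \mapsto s^{-m}$. At each complex place $\sigma$ of $\Q(s^m)$ the image $(s^m)^\sigma$ is a complex conjugate lying in $\eta$, so $J_{s^m}^\sigma = J_{(s^m)^\sigma}$ is positive definite and $\phi^\sigma$ is complex conjugation. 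Corollary \ref{discbraid} then shows that $\rho_{s^m}$ is discrete, and ranging over Salem numbers and their admissible powers produces infinitely many such discrete specializations.
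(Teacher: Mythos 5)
Your proposal is correct, and its skeleton is exactly the paper's: the paper offers no separate proof of this corollary, presenting it as an immediate application of Theorem \ref{thmdiscrep} to the form produced by Theorem \ref{jonesunitary}, which is precisely what your first and last paragraphs carry out (Lemma \ref{posreal} to put all complex conjugates of $s^m$ in $\eta$, then Corollary \ref{discbraid}). Where you go beyond the paper is the middle paragraph, and this is a genuine contribution rather than a detour: Theorem \ref{thmdiscrep} requires the representation to land in $GL_m(\Z[q,q^{-1}])$, the paper never verifies this for the Jones representations, and it is indeed false for the seminormal (Hoefsmit) realization, whose matrices have quantum-integer denominators. You correctly identify that this matters, because the finiteness step in Theorem \ref{SUdiscrete} counts algebraic integers of bounded degree with bounded conjugates, so non-integral specialized entries would break the argument. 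Your repair --- passing to an equivalent integral Specht/cell-module model over $\Z[q,q^{-1}]$, observing that Theorem \ref{jonesunitary} applies verbatim to any model of the same irreducible representation (with $-J$ substituted for $J$ if necessary), and noting that the form $J$ itself need not be integral since the discreteness argument constrains only the group elements --- is correct and standard, and it closes a hypothesis-checking gap that the paper leaves silent. In short: same route as the paper, but with the one genuinely delicate hypothesis actually verified.
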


\begin{ex} \label{33}

Given explicit matrices $S_1,\cdots, S_{n-1}$ for the generators of an irreducible Jones representation of $B_n$, $J$ can be directly computed by solving  the \textit{linear} systems$$S_i^*JS_i-J=0$$ for $1\leq i\leq  n-1$. The form can be made Hermitian by taking $J+J^*$.

On page 362 of \cite{JONES}, Jones gives explicit matrices for the irreducible Jones representation of $B_6$  corresponding to the Young diagram 
\begin{tikzpicture}
\draw[-](0,0)--(.4,0)--(.4,.6)--(0,.6)--(0,0)(.2,0)--(.2,.6)(0,.2)--(.4,.2)(0,.4)--(.4,.4);
\end{tikzpicture}, which has only  one $5$-subdiagram, 
\begin{tikzpicture}
\draw[-](0,0)--(.2,0)--(.2,.6)--(0,.6)--(0,0)(.4,.2)--(.4,.6)(0,.2)--(.4,.2)(0,.4)--(.4,.4)(0,.6)--(.4,.6);
\end{tikzpicture}. The restriction to $B_5$ is also irreducible, and the same form $J$ will work for both the restriction and the full representation. Solving four linear equations as described above yields, 
 \[ J=\left( \begin{array}{ccccc}
\frac{(1 + q)^2}{q}& -1 - q& 2& -1 - q& -1 - q\\
 -\frac{1 + q}{q}& \frac{1 + q + q^2}{q}& -\frac{1 + q}{q}& 1& 1\\
  2& -1 - q& \frac{(1 + q)^2}{q}& -1 - q& -1 - q\\
   -\frac{1 + q}{q}& 1& -\frac{1 + q}{q}& \frac{1 + q + q^2}{q}& 1\\
 -\frac{1 + q}{q} & 1& -\frac{1 + q}{q}& 1& \frac{1 + q + q^2}{q}
  \end{array} \right).\]

\end{ex}



\section{The BMW Representations }\label{sec:BMW}

 The BMW algebras $C_n(l,m)$ are a two parameter family of algebras with $n-1$ generators \cite{BW,MURA,ZIN}. The invertible generators are denoted $G_1,\cdots ,G_{n-1}$ which satisfy the following relations in terms of non-invertible elements denoted by $E_i$ as follows:
\begin{align}
G_iG_j&=G_jG_i\text{\hspace{.5cm}for } |i-j|>1\label{eq:bmwfarrel}\\
G_iG_{i+1}G_i&=G_{i+1}G_iG_{i+1}\label{eq:bmwbraidrel}\\
G_i^2&=m(G_i+l^{-1}E_i)-1.\label{eq:bmw}
\end{align}

There are many additional relations  which can be found in \cite{BW}. Equations \ref{eq:bmwfarrel} and \ref{eq:bmwbraidrel} show that $C_n(l,m)$ can be seen as a quotient of $\C[B_n]$ and there is a standard homomorphism sending $\sigma_i\mapsto G_i$. Also, $C_{n-1}(l,m)\subseteq C_n(l,m)$ and this respects the usual inclusion $B_{n-1}\subseteq B_n$. A representation of the BMW algebra induces a representation of the braid group by mapping $\sigma_i\mapsto \rho(G_i)$. Notice, if $E_i=0$ then equation \ref{eq:bmw} reduces to $G_i^2=mG_i-1$ which is very close to the defining relation for the Hecke algebras. The Hecke algebras are indeed isomorphic to a quotient of the BMW algebra best described by $E_i\mapsto 0$ and $G_i\mapsto lg_i$. This copy of the Hecke algebra inside $C_{n}(l,m)$ is denoted by  $H_n$.

 Analogously to the Jones representations, the irreducible representations of the BMW algebras are parameterized by a Bratteli diagram whose vertices are Young Diagram as shown in Figure \ref{fig:bmwlattices}, but the restriction rule is quite different from the standard Young lattice \cite{BW}. \\

\noindent \textit{ \textbf{BMW restriction rule}: A Young diagram $\lambda_n$ in row $n$ is connected to a Young diagram $\lambda_{n+1}$ in row $n+1$  if $\lambda_{n+1}$ is obtained from $\lambda_n$ by adding or removing a single box.}\\
 
 As depicted in Figure \ref{fig:bmwlattices}, the standard Young lattice occurs in the Bratteli diagram and corresponds to $H_n$, the subalgebra of $C_n(l,m)$ isomorphic to the Hecke algebras. The induced representations of the braid group coming from the subalgebra $H_n$ are the Jones representations \cite{ZIN}.

\begin{figure}[h]
\begin{center}
\begin{tikzpicture}

\draw[xshift=-.75cm, red](0,5)--(.25,5)--(.25,5.25)--(0,5.25)--(0,5);
\draw[] (-.65,4.95)--(-1.25,4.5);
\draw[] (-.65,4.95)--(.2,4.55);
\draw[] (-.65,4.95)--(1,4.55);

\node[red] at (-1.5,4.3) {$\emptyset$};
\draw[-,] (0,4)--(.25,4)--(.25,4.5)--(0,4.5)--(0,4)(0,4.25)--(.25,4.25);
\draw[-] (.75,4.25)--(1.25,4.25)--(1.25,4.5)--(.75,4.5)--(.75,4.25)(1,4.24)--(1,4.5);

\begin{scope}[yshift=-.4cm]
\node[] at (-1.5,1.8) {$\emptyset$};
\end{scope}

\draw[] (-1.25,4.1)--(-.665,3.3);

\draw[] (.2,3.9)--(-.635,3.3);
\draw[] (.2,3.9)--(2,3.3);
\draw[] (.2,3.9)--(3,3.3);

\draw[] (1,4.15)--(-.59,3.3);
\draw[] (1,4.15)--(3.1,3.3);
\draw[] (1,4.15)--(4,3.35);

\draw[xshift=-.75cm,yshift=-2cm,red](0,5)--(.25,5)--(.25,5.25)--(0,5.25)--(0,5);

\begin{scope}[xshift=2cm, yshift=-.25cm]
\draw[](0,3.5)--(.25,3.5)--(.25,2.75)--(0,2.75)--(0,3.5)(0,3)--(.25,3)(0,3.25)--(.25,3.25);
\draw[] (.75,3.5)--(1.25,3.5)--(1.25,3.25)--(.75,3.25)--(.75,3.5)(1,3.5)--(1,3)--(.75,3)--(.75,3.25);
\draw[] (1.75, 3.5)--(2.5,3.5)--(2.5,3.25)--(1.75,3.25)--(1.75,3.5)(2,3.5)--(2,3.25)(2.25,3.5)--(2.25,3.25);
\end{scope}

\draw[] (-.65,2.9)--(-1.35,1.6);
\draw[] (-.65,2.9)--(.1,1.55);
\draw[] (-.65,2.9)--(1,1.55);

\draw[](2.15 ,2.45)--(.15,1.55);
\draw[](2.15 ,2.45)--(2.15,1.55);
\draw[](2.15 ,2.45)--(2.9,1.55);

\draw[](3 ,2.65)--(.25,1.55);
\draw[](3 ,2.65)--(1.1,1.55);
\draw[](3 ,2.65)--(3,1.55);
\draw[](3 ,2.65)--(4,1.55);
\draw[](3 ,2.65)--(5,1.55);

\draw[] (4.12 ,2.95)--(1.2,1.55);
\draw[] (4.12 , 2.95)--(5.1,1.55);
\draw[] (4.12 , 2.95)--(6.1,1.55);


\begin{scope}[yshift=-3cm]
\draw[-] (0,4)--(.25,4)--(.25,4.5)--(0,4.5)--(0,4)(0,4.25)--(.25,4.25);
\draw[-,red] (.75,4.25)--(1.25,4.25)--(1.25,4.5)--(.75,4.5)--(.75,4.25)(1,4.24)--(1,4.5);
\end{scope}

\begin{scope}[yshift=-.5cm, xshift=2cm]
\draw[](0,2)--(.25,2)--(.25,1)--(0,1)--(0,2)(0,1.25)--(.25,1.25)(0,1.5)--(.25,1.5)(0,1.75)--(.25,1.75);
\draw[] (.75,2)--(1.25,2)--(1.25,1.75)--(.75,1.75)--(.75,2)(1,2)--(1,1.25)--(.75,1.25)--(.75,1.75)(.75,1.5)--(1,1.5);
\draw[] (1.75, 2)--(2.25,2)--(2.25,1.5)--(1.75,1.5)--(1.75,2)(2,2)--(2,1.5)(1.75,1.75)--(2.25,1.75);
\draw[xshift=-.2cm] (3, 2)--(3.75,2)--(3.75,1.75)--(3,1.75)--(3,2)(3.25,2)--(3.25,1.75)(3.5,2)--(3.5,1.75) (3,2)--(3,1.5)--(3.25,1.5)--(3.25,1.75); 
\draw[xshift=1cm] (3, 2)--(3.75,2)--(3.75,1.75)--(3,1.75)--(3,2)(3.25,2)--(3.25,1.75)(3.5,2)--(3.5,1.75)(3.75,2)--(4,2)--(4,1.75)--(3.75,1.75); 
\node[] at(1,.9){$\vdots$};
\node[] at(-3,.9){$\vdots$};
\node[] at (4.5,.9){$\ddots$};
\end{scope}

\end{tikzpicture}
\caption{Bratteli diagram for the restriction rules of the irreducible representations of the BMW algebras. The Lawrence-Krammer representation is shown in red.}\label{fig:bmwlattices}
\end{center}
\end{figure}
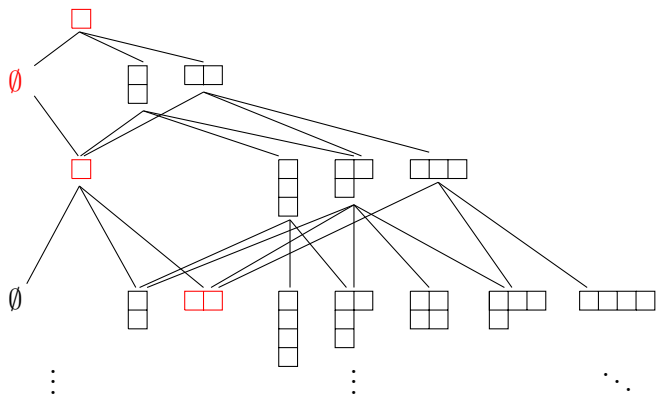

Notice that any $\lambda_{n+1}$ in the $n+1$'st row is completely determined by the the set of diagrams in the $n$'th row connecting to $\lambda_{n+1}$. We can describe this property by saying a diagram is determined up to equivalence by the restriction rule (equivalent as in the sense of Definition \ref{def:equivalentreps} from Section \ref{sec:heckeandyoung}). Similar to how the the Burau representation is one irreducible summand of the Jones representations, Zinno proved in \cite{ZIN} that the Lawrence-Krammer representation is one summand of the BMW representations, colored red in Figure \ref{fig:bmwlattices}.

\begin{ex}\label{ex:onedimBMW}

The three 1-dimensional BMW representations are $\varphi_1(G)=\frac{1}{l}$, $\rho_1(G)= \frac{1}{2}(m-1\sqrt{-4+m^2}) $ and  $\rho_2(G)= \frac{1}{2}(m+1\sqrt{-4+m^2}) $.  Since $\rho_i(E)=0$, both $\rho_i$'s are representations of the Hecke algebra and correspond to the two Young diagrams in the second row of the Bratteli diagram. The third representation $\varphi_1$ corresponds to the  $\emptyset$ diagram.

\end{ex}


\begin{ex}
In \cite{BW}, Birman and Wenzl computed the representation of $B_3$ corresponding to the single box Young diagram.
 \[\sigma_i\mapsto \left( \begin{array}{ccc}
l^{-1} & m & 0  \\
0 & m  & 1\\
0&-1& 0 \end{array} \right), 
\sigma_2\mapsto
\left( \begin{array}{ccc}
0& 0 & -1 \\
0 & l^{-1}  & l^{-1}m \\
1&0& m \end{array} \right)
\]
\end{ex}

\subsection{Sesquilinearity}

 Budney proved that the Lawrence Krammer representation is sesquilinear \cite{BUD}. This section extends Budney's results to all of the BMW representations as stated in the following theorem.

\begin{thm}\label{thm:bmwsesq}
If $\rho$ is an irreducible BMW representation of $B_n$  then there exists a non-degenerate, sesquilinear matrix $J$ so that for all $M$ in the image of $\rho$, $M^* JM=J$.
\end{thm}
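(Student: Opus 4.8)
The plan is to follow the template of the Jones case essentially verbatim, replacing the Young lattice and its branching rule (Theorem \ref{branching}) with the Bratteli diagram of Figure \ref{fig:bmwlattices} and the BMW restriction rule. Observe first that the conclusion here is weaker than in Theorem \ref{jonesunitary}: only non-degeneracy and the sesquilinear relation are claimed, not positive-definiteness. Consequently the final assembly of $J$ reduces to the opening portion of the proof of Theorem \ref{jonesunitary}, and the $q=1$ specialization argument used there to obtain definiteness is unnecessary. The one substantive new ingredient is the BMW analog of Lemma \ref{equiv}.

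So first I would prove the following: every finite-dimensional irreducible representation $\rho$ of $C_n(l,m)$ is equivalent to its $\phi$-twisted contragredient $\tilde{\rho}$, for generic $(l,m)$. The argument is induction on $n$. For the base case one checks directly that the one-dimensional representations of Example \ref{ex:onedimBMW} are each equivalent to (in fact equal to) their $\phi$-twisted contragredient; this amounts to the identity $c\, c^\phi = 1$ for the scalar $c$ by which each generator acts, which holds precisely because $\phi$ interchanges the two eigenvalues $\tfrac12(m\pm\sqrt{m^2-4})$ and fixes $l^{-1}\cdot l = 1$, with Budney's result \cite{BUD} supplying the Lawrence--Krammer vertex and the remaining small diagrams verified by hand. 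For the inductive step, restrict $\rho$, corresponding to a diagram $\lambda$ in row $n$, to $C_{n-1}(l,m)$: by the BMW restriction rule this restriction is equivalent to $\bigoplus_i \rho_{\mu_i}$, where the $\mu_i$ are exactly the row-$(n-1)$ diagrams joined to $\lambda$. Choosing a basis in which the restriction is block diagonal, the $\phi$-twisted contragredient preserves the block decomposition, so $\tilde{\rho}|_{C_{n-1}(l,m)}$ is equivalent to $\bigoplus_i \tilde{\rho}_{\mu_i}$. The inductive hypothesis gives $\rho_{\mu_i}\cong\tilde{\rho}_{\mu_i}$ for each $i$, so $\rho$ and $\tilde{\rho}$ restrict to the same set of row-$(n-1)$ diagrams. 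Since a vertex of the Bratteli diagram is determined up to equivalence by the set of lower-row diagrams joined to it, $\rho$ and $\tilde{\rho}$ correspond to the same $\lambda$ and are therefore equivalent.

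With this lemma in hand, the form is produced exactly as in Theorem \ref{jonesunitary}. Let $\mathcal{T}$ be the matrix of an equivalence $\rho\cong\tilde{\rho}$; rearranging the intertwining identity shows that $\mathcal{T}$ and $\mathcal{T}^*$ are both forms for the absolutely irreducible $\rho$, so Proposition \ref{onlyform} yields $\mathcal{T}=\alpha\mathcal{T}^*$ with $\alpha\alpha^*=1$. Setting $J=\beta\mathcal{T}+\beta^*\mathcal{T}^*$ and choosing $\beta$ as in that proof ($\beta=1$ when $\alpha\neq-1$, and $\beta$ with $\beta^*\neq\beta$ when $\alpha=-1$) makes $J$ invertible, while $J^*=J$ and $M^*JM=(\alpha\beta+\beta^*)M^*\mathcal{T}^*M=J$ give sesquilinearity and invariance. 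As definiteness is not required, this completes the argument.

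The main obstacle is the inductive lemma, and within it the determinacy claim that a Bratteli vertex is recovered from the set of diagrams joining it to the previous row. In the Hecke setting this followed from the clean fact that a Young diagram is reconstructed from any two of its $(n-1)$-subdiagrams, whereas the BMW rule both adds and removes boxes, so a different combinatorial reconstruction is needed; this is the property asserted in the paragraph following the BMW restriction rule, which one must establish carefully. A secondary point of care, absent in the Hecke case, is the two-parameter involution $\phi$: one must fix the automorphism so that it interchanges the conjugate eigenvalues $\tfrac12(m\pm\sqrt{m^2-4})$ and inverts $l$, confirm that the irreducible BMW representations are absolutely irreducible for generic $(l,m)$ so that Proposition \ref{onlyform} applies, and check the base case under this $\phi$.
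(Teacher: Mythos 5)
Your proposal is correct and follows essentially the same route as the paper: prove the BMW analog of Lemma \ref{equiv} by induction (base case the one-dimensional representations of Example \ref{ex:onedimBMW} with $\phi$ extended by $\phi(\sqrt{m^2-4})=-\sqrt{m^2-4}$, inductive step via block-diagonal restriction and determinacy of Bratteli vertices), then assemble $J=\beta\mathcal{T}+\beta^*\mathcal{T}^*$ exactly as in Theorem \ref{jonesunitary} with the positive-definiteness argument dropped. The caveats you flag (combinatorial reconstruction of a vertex from its lower neighbors, absolute irreducibility for Proposition \ref{onlyform}) are likewise asserted rather than proved in detail in the paper, so your attempt matches it in both structure and level of rigor.
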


To make sense of the $*$ operation, the relevant involution for the BMW algebra is $l\mapsto \frac{1}{l}$, $m\mapsto m$, and $\alpha\mapsto \frac{1}{\alpha}$ where $m=\alpha+\frac{1}{\alpha}$. We will denote this involution by $\phi$. So to show sesquilineararity in this context is to show that the representations are equivalent to their $\phi$-twisted contragrediant representation using $\phi$ to define $*$.

The proof of Theorem \ref{thm:bmwsesq} is exactly analogous to the proof of Theorem \ref{jonesunitary} showing that the Jones representations are sesquilinear, excluding the positive definite argument. It is only necessary to prove the following Lemma.

\begin{lem}
If $\rho$ is an irreducible BMW representation of $B_n$, then $\rho$ is equivalent to its $\phi$-twisted contragredient representation.
\end{lem}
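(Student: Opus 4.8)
The plan is to run the same induction on $n$ used in Lemma \ref{equiv}, simply replacing the Young-lattice branching of Theorem \ref{branching} by the BMW restriction rule and the Young lattice by the Bratteli diagram of Figure \ref{fig:bmwlattices}. Throughout I take $l$ and $m$ generic, so that each $C_n(l,m)$ is semisimple and its irreducibles are indexed by the row-$n$ vertices of the Bratteli diagram. The first thing to record is that the $\phi$-twisted contragredient $\tilde{\rho}$ of an irreducible $C_n(l,m)$-representation is again an irreducible representation of $C_n(l,m)$, hence corresponds to some Bratteli vertex $\tilde{\lambda}$. The involution $\phi$ (with $l\mapsto 1/l$, $\alpha\mapsto 1/\alpha$, $m\mapsto m$) was chosen precisely to make this true: the generator $G_i$ of any BMW representation has eigenvalues among $l^{-1},\alpha,\alpha^{-1}$, and inverting (the contragredient) followed by applying $\phi$ carries the set $\{l^{-1},\alpha,\alpha^{-1}\}$ back to itself, so $\tilde{\rho}$ factors through the same algebra; irreducibility of $\tilde{\rho}$ is immediate from that of $\rho$.

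For the base case I take $n=2$. By Example \ref{ex:onedimBMW} the three irreducible representations of $C_2(l,m)$ are one-dimensional, $\varphi_1(G)=l^{-1}$, $\rho_1(G)=\frac{1}{2}(m-\sqrt{m^2-4})$ and $\rho_2(G)=\frac{1}{2}(m+\sqrt{m^2-4})$. Writing $m=\alpha+\alpha^{-1}$ gives $\sqrt{m^2-4}=\alpha-\alpha^{-1}$, so these images are $l^{-1},\alpha^{-1},\alpha$. Since the $\phi$-twisted contragredient of a $1\times 1$ matrix $(x)$ is $(x^{-1})^\phi$, and $\phi$ sends each of $l^{-1},\alpha^{-1},\alpha$ to its own reciprocal, each of these representations equals its own $\phi$-twisted contragredient. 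Thus the lemma holds for $n=2$, and these three vertices are exactly the row-$2$ diagrams.

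For the inductive step, let $\rho:C_n(l,m)\to GL(V)$ be irreducible, corresponding to a diagram $\lambda$ in row $n$, and let $\tilde{\rho}$ correspond to $\tilde{\lambda}$. Restriction to $C_{n-1}(l,m)$ respects the inclusion $B_{n-1}\subseteq B_n$, and by the BMW restriction rule there is an equivalence $T$ with $T\,\rho|_{C_{n-1}}(h)\,T^{-1}=\bigoplus_{i}\rho_{\mu_i}(h)$, where the $\mu_i$ are exactly the row-$(n-1)$ diagrams joined to $\lambda$, obtained from $\lambda$ by adding or removing a single box. Exactly as in Lemma \ref{equiv}, taking the $\phi$-twisted contragredient of a block-diagonal matrix preserves the block decomposition, so $\tilde{\rho}|_{C_{n-1}}$ is equivalent to $\bigoplus_i\tilde{\rho}_{\mu_i}$. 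By the inductive hypothesis each $\tilde{\rho}_{\mu_i}$ is equivalent to $\rho_{\mu_i}$ and so corresponds to the same diagram $\mu_i$; therefore $\rho|_{C_{n-1}}$ and $\tilde{\rho}|_{C_{n-1}}$ decompose with the identical collection of row-$(n-1)$ labels $\{\mu_i\}$.

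It remains to pass from equality of the sets of lower neighbors to the equality $\lambda=\tilde{\lambda}$, and this is supplied by the observation recorded after Figure \ref{fig:bmwlattices}: a vertex in row $n$ is completely determined by the set of row-$(n-1)$ vertices joined to it. Consequently $\lambda=\tilde{\lambda}$, the representations $\rho$ and $\tilde{\rho}$ are equivalent, and the induction closes. The step I expect to be delicate is precisely this last one: because the BMW restriction rule allows both adding \emph{and} removing a box, unlike the strictly box-removing branching of Theorem \ref{branching}, one cannot recover $\lambda$ by the naive overlay-of-subdiagrams argument of Section \ref{sec:heckeandyoung}, so the argument rests essentially on the determinacy property asserted for the Bratteli diagram. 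Confirming that this determinacy holds in every row, equivalently that two distinct row-$n$ vertices never share the same set of row-$(n-1)$ neighbors, is the main point to pin down, after which the remainder is formally identical to the Hecke case.
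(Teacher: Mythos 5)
Your proposal is correct and follows essentially the same route as the paper's own proof: induction on $n$ with the three one-dimensional representations of $C_2(l,m)$ as the base case (your substitution $m=\alpha+\alpha^{-1}$, $\sqrt{m^2-4}=\alpha-\alpha^{-1}$ is equivalent to the paper's extension $\phi(\sqrt{-4+m^2})=-\sqrt{-4+m^2}$), followed by the block-diagonal restriction argument of Lemma \ref{equiv} and the appeal to the fact that a row-$n$ vertex of the Bratteli diagram is determined by its row-$(n-1)$ neighbors. The determinacy property you flag as the delicate point is likewise asserted without proof in the paper (it genuinely fails in row $2$, where all three vertices share the single neighbor $\square$, which is exactly why the explicit base case is needed), so your proof and the paper's stand at the same level of rigor.
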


\begin{proof} Analogously to Lemma \ref{equiv}, we will prove this result by induction on $n$. 

Let $\rho$ be an irreducible BMW representation of $B_2$. As shown in Example \ref{ex:onedimBMW}, there are three possible 1-dimensional representations given by $\varphi_1(G)=\frac{1}{l}$, $\rho_1(G)= \frac{1}{2}(m-1\sqrt{-4+m^2}) $ and  $\rho_2(G)= \frac{1}{2}(m+1\sqrt{-4+m^2}) $. 

In the diagonalization process to compute these representations, we introduced a square-root term $ \sqrt{-4+m^2}$. Extending $\phi$ to the field including this term, we define $\phi(\sqrt{-4+m^2})=-\sqrt{-4+m^2}$. Each of these one-dimensional representations are equal to their $\phi$-twisted contragredient representation.

Inductively moving forward, let $\rho$ be an irreducible BMW representation of $B_n$, and $\tilde{\rho}$ be the $\phi$-twisted contragadient of $\rho$. Let $\rho$ correspond to Young diagram $\lambda_1$ and $\tilde{\rho}$ correspond to $\lambda_2$. Since $\lambda_1$ and $\lambda_2$ are completely determined by the BMW restriction rule, the inductive step is the exactly the same as in Lemma \ref{equiv}.
\end{proof}

\subsection{} 


\noindent\textit{\textbf{Positive Definiteness Conjecture:} The forms for the BMW representations, found in Theorem \ref{thm:bmwsesq}, are positive definite for specializations in some open neighborhood in $\mathbb{C}^2$.}\\

This conjecture has been experimentally verified for several of the smaller indexed BMW representations as described in Example \ref{ex:bmw2} to follow, but has not been proven in general. Since the proof of Theorem  \ref{thm:bmwsesq} was completely analogous to that of Theorem \ref{jonesunitary}, at first glance there is hope to repeat the positive definiteness argument that worked for the Jones representations. However, there is a major obstacle that prevents this approach from generalizing to the BMW representations. The forms for the Jones representations found in Theorem \ref{jonesunitary} were proven to be positive definite in a complex neighborhood of 1 by using the fact that the Hecke algebras are a deformation of the complex symmetric algebras. That is at $q=1$, $H_q(n)$ collapses to $\C[\Sigma_n]$. Since $\Sigma_n$ is a finite group, its representations are unitary.  Now in a similar way, the BMW algebras are a deformation of the Brauer algebras. However it is unknown whether the irreducible representations of the Brauer algebras are unitary/sesquilinear or not. So some further investigation into the representation theory of the Brauer algebras could give deeper insight into the conjecture. 

 \begin{ex}\label{ex:bmw2}   For the BMW representation of $B_4$ is given on page 272 of \cite{BW}, the form $J$ is the diagonal matrix with the following diagonal entries.  For notational clarity $L=l$.
\begin{align*}
J_{1,1}=&2\\
J_{2,2}=&  -\frac{2 a \left(L^2+1\right) \left(2 a^2 L-a L^2-a+2 L\right)}{(a-L)^2 (a
   L-1)^2}\\
J_{3,3}=   & \frac{2 \left(L^2+1\right) \left(a^3+L\right) \left(a^3 L+1\right)}{a (a-L)
   (a L-1) \left(2 a^2 L-a L^2-a+2 L\right)}\\   
J_{4,4}=& \frac{2 (a+L) \left(a^5 L^2+a^4 L-a^3 L^2-a^3+a^2 L^3+a^2 L-a
   L^2-L\right)}{a \left(L^2+1\right) \left(a^3+L\right) (a L-1)} \\   
 J_{5,5}=  & \frac{2 (a+L) (a L+1) \left(a^3 L+1\right) \left(2 a^3 L^2+a^3+a^2
   L+a L^2+L^3+2 L\right)}{a \left(L^2+1\right) (a L-1) \left(a^5 L^2+a^4 L-a^3
   L^2-a^3+a^2 L^3+a^2 L-a L^2-L\right)}\\   
J_{6,6}=   & -\frac{2 \left(a^5-L\right) (a+L) (a L+1) \left(a^3
   L+1\right)}{a^3 (a L-1) \left(2 a^3 L^2+a^3+a^2 L+a L^2+L^3+2 L\right)}
\end{align*}

Evaluating $a=i$ and $L=1$ leaves $J=2Id$, giving a point where $J$ is positive definite. Continuity of the determinant implies that $J$ is positive definite in a neighborhood of  $(i,1)$ on the complex torus, though it is difficult to determine explicitly the radius of this neighborhood. Taking the Salem number $S=\frac{1}{2} + \frac{1}{\sqrt{2}} + \frac{1}{2 \sqrt{-1 + 2 \sqrt{2}}}$, specializing $a=S^{15}$ and $L=S^3$ leaves $J$ positive definite at the complex places of $\mathcal{O}_{\mathbb{Q}(S)}$. So this representation is discrete at the specialization  $a=S^{15}$, $L=S^3$.

 \end{ex}



\section{Commensurability }\label{sec:Comm}

The irreducible Jones representations corresponding to rectangular Young diagrams, as in Example \ref{33}, are particularly interesting because they each have only one $(n-1)$-subdiagram. This implies that both the restriction representation and the full representation are irreducible, of the same dimension and use the same form $J$; both representations map into the same unitary group. This situation can be mimicked for the other irreducible Jones representations for the non-rectangular diagrams. The approach is to fix a representation and specialize to two different powers of the same Salem number. The ring of integers $\Ok$ will stay the same, but the defining sesquilinear forms might be very different.

Recall the notation of $K$, $L$, $\mathcal{O}_K$ and $\phi$ from Section \ref{sec:Salem}. In general, fixing a number ring $\Ok$ and dimension $m$, the group $U_m(J,\phi , \Ok)$ is determined by the form $J$.  Notice that $U_m(J,\phi , \Ok)=U_m(\lambda J,\phi , \Ok)$ for every $\lambda\in L$, and that the form $J$ is not completely unique. This motivates that following definition.

 \begin{defn}\label{equivform} Matrices $J$ and $H$  are equivalent over $K$ if $Q^* JQ=\lambda H$ for some $Q\in GL_m(K)$ and $\lambda\in Fix(\phi )$.
\end{defn}

It would be nice if equivalent forms gave rise to \textit{equal} unitary groups, but this is not true in general. However, in the careful scenario that the unitary group is a lattice in $SL(\mathbb{R})$, then changing the form by equivalence yields ``the same" lattice, up to commensurability in the following sense.

\begin{defn}
Two groups $G_1$ and $G_2$ are \textbf{commensurable} if there are finite index subgroups $H_1\subseteq G_1$ and $H_2\subseteq G_2$ so that $H_1$ is conjugate to $H_2$.\end{defn}

\begin{defn}
 A \textbf{lattice} in a semisimple Lie group $G$ is a discrete subgroup of $G$ with finite covolume.
\end{defn}

For our purposes, we will take $G=SL_m(\mathbb{R})$ or $PSL_m(\mathbb{R})$.

\begin{prop}\label{SUcomm}
Assume $SU_m(J_1,\phi ,\Ok)$ and $SU_m(J_2,\phi ,\Ok)$ are lattices in $SL_m(\mathbb{R})$. If $J_1$ and $J_2$ are equivalent over $K$, then $SU_m(J_1,\phi ,\Ok)$ is commensurable to $SU_m(J_2,\phi ,\Ok)$
\end{prop}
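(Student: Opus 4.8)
The plan is to produce a single subgroup sitting with finite index inside both lattices, after first replacing one of them by a conjugate. Write $\Gamma_1=SU_m(J_1,\phi,\Ok)$ and $\Gamma_2=SU_m(J_2,\phi,\Ok)$, and let $Q\in GL_m(K)$ and $\lambda\in\mathrm{Fix}(\phi)$ realize the equivalence $Q^*J_1Q=\lambda J_2$ from Definition \ref{equivform}. First I would check that conjugation $c_Q(M)=QMQ^{-1}$ carries the full $K$-point group $SU_m(J_2,\phi,K)$ isomorphically onto $SU_m(J_1,\phi,K)$: a direct manipulation gives $(QMQ^{-1})^*J_1(QMQ^{-1})=\lambda(Q^*)^{-1}(M^*J_2M)Q^{-1}=\lambda(Q^*)^{-1}J_2Q^{-1}=J_1$, using $J_2=\lambda^{-1}Q^*J_1Q$ in the last step; since $\det(QMQ^{-1})=\det M$, the special unitary condition is preserved. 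Thus $\Delta:=Q\Gamma_2Q^{-1}$ is a subgroup of $SU_m(J_1,\phi,K)$, and because the definition of commensurability is taken up to conjugacy, it suffices to prove that $\Gamma_1$ and $\Delta$ share a common finite-index subgroup.

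The obstacle is that $Q$ has entries in $K$ rather than in $\Ok$, so $c_Q$ does not map the integral group $\Gamma_2$ into $\Gamma_1$; it only matches the $K$-points. To control this I would first clear denominators: since $c_{dQ}=c_Q$ for any nonzero integer $d$, and since $(dQ)^*J_1(dQ)=d^2\lambda J_2$ with $d^2\lambda\in\mathrm{Fix}(\phi)$, I may assume $Q\in M_m(\Ok)$, so that $Q\Ok^m$ is a finite-index $\Ok$-sublattice of $\Ok^m$. Now $\Gamma_1$ stabilizes the lattice $\Ok^m$ while $\Delta$ stabilizes $Q\Ok^m$, and for $M\in\Gamma_1$ one checks that $M\in\Delta$ exactly when $M(Q\Ok^m)=Q\Ok^m$: the unitary and determinant conditions on $Q^{-1}MQ$ are already guaranteed by the previous paragraph, and integrality of $Q^{-1}MQ$ is equivalent to $M$ preserving $Q\Ok^m$.

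The finite-index count then runs as follows. Set $N=[\Ok^m:Q\Ok^m]<\infty$. Every $M\in\Gamma_1$ is a $\Z$-automorphism of $\Ok^m$, so it permutes the finite set of index-$N$ sublattices of $\Ok^m$; hence $\Gamma_1\cap\Delta=\mathrm{Stab}_{\Gamma_1}(Q\Ok^m)$ is the stabilizer of a point in a finite set and has finite index in $\Gamma_1$. Symmetrically, each $M\in\Delta$ is a $\Z$-automorphism of $Q\Ok^m$ and permutes the finitely many overlattices of $Q\Ok^m$ of index $N$, one of which is $\Ok^m$, so $\Gamma_1\cap\Delta=\mathrm{Stab}_{\Delta}(\Ok^m)$ has finite index in $\Delta$. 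Thus $H_1:=\Gamma_1\cap\Delta$ is a common finite-index subgroup. Setting $H_2:=Q^{-1}H_1Q\leq\Gamma_2$, which has finite index in $\Gamma_2$ since conjugation by $Q$ is an index-preserving isomorphism $\Delta\to\Gamma_2$, we have $H_1=QH_2Q^{-1}$ conjugate to $H_2$ in $GL_m(\R)$, which is exactly the commensurability of $\Gamma_1$ and $\Gamma_2$.

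The main obstacle is precisely the non-integrality of $Q$ discussed above: the conjugation is forced to live over $K$, and the heart of the argument is the standard observation that $\Ok^m$ and $Q\Ok^m$ are commensurable $\Ok$-lattices, together with the fact that the stabilizer in an arithmetic group of a finite-index sublattice has finite index. The lattice hypothesis on $\Gamma_1,\Gamma_2$ is not strictly needed for the finite-index bookkeeping itself, but it is what makes the conclusion meaningful and ensures that the common subgroup $\Gamma_1\cap\Delta$ is again a lattice.
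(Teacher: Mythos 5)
Your proof is correct, and it takes a genuinely different route from the paper's. The paper also conjugates by $Q$ and also has to confront the non-integrality of $Q$, but it resolves it with an arithmetic-group device: choosing $\gamma\in\Ok$ with $\gamma Q$ integral, it passes to the principal congruence subgroup $N=\ker\left(SU_m(J_1,\phi,\Ok)\rightarrow SU_m(J_1,\phi,\Ok/\langle\gamma^2\rangle)\right)$, checks by hand that elements $B=Id+\gamma^2A$ conjugate into $SU_m(J_2,\phi,\Ok)$, and then — crucially — invokes the lattice hypothesis: $QNQ^{-1}$ is a lattice sitting inside the lattice $SU_m(J_2,\phi,\Ok)$, hence of finite index by a covolume/Margulis argument. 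Your argument replaces both steps with module theory: after clearing denominators you identify $\Gamma_1\cap Q\Gamma_2Q^{-1}$ as the stabilizer in $\Gamma_1$ of the finite-index $\Z$-sublattice $Q\Ok^m\subseteq\Ok^m$ (the equality $M(Q\Ok^m)=Q\Ok^m$ correctly encodes integrality of both $Q^{-1}MQ$ and its inverse), and orbit–stabilizer on the finite set of index-$N$ sublattices, and symmetrically of overlattices, gives finite index on both sides. This buys two things the paper's proof does not have: the finite-covolume hypothesis becomes irrelevant to the commensurability statement itself (as you note), so your argument proves a more general statement about integral unitary groups of $K$-equivalent forms; and it produces a concrete common subgroup, the intersection, rather than a congruence subgroup whose image is only shown to be contained with finite index. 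What the paper's route buys is alignment with the arithmetic framework used elsewhere in that section (congruence subgroups, Dedekind quotients, lattices), and incidentally your denominator-clearing is cleaner on one point: the paper's integrality claim for $QBQ^{-1}=Id+(\gamma Q)A(\gamma Q^{-1})$ silently needs $\gamma Q^{-1}$ integral as well, whereas your stabilizer formulation never needs any integrality of $Q^{-1}$ separately.
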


\begin{proof}
 Let  $\lambda J_1=Q^*J_2Q$ for some $Q\in GL_m(K)$ and $\lambda\in Fix(\phi )$.  For notational clarity, denote $SU(J_i,\Ok)=SU_m(J_i,\phi ,\Ok)$.
 
 Since scalar multiplication commutes with matrix multiplication, then $M^*JM=J$ if and only if $M^* \lambda J M=\lambda J$. So scaling the form preserves the unitary group, and without loss of generality we may assume $\lambda =1$.
It is easy to see that $M^*JM=J$ if and only if $(Q^*M^*Q^{*-1})(Q^*JQ)(Q^{-1}MQ)=Q^*JQ$, which seems like it implies that $SU(Q^* J_1Q,\Ok)=Q^{-1}SU(J_1,\Ok)Q$. However, since $Q$ has coefficients in $K$,  $Q^{-1}MQ$ may not have coefficients in $\Ok$, so we can only conclude that $Q^{-1}SU(J, \Ok)Q\subseteq SU(Q^*JQ,K)$. To avoid this, we need to pass to a finite index subgroup.

Since $K$ is the ring of fractions of $\Ok$, then there exists $\gamma\in \Ok$ so that $\gamma Q\in M_m(\Ok)$. As a ring of integers of an algebraic extension, $\Ok$ is a Dedekind domain and every quotient is finite. So $\Ok/\langle \gamma^2 \rangle$ is finite and $SU(J_1, \Ok/\langle \gamma^2 \rangle)$ is finite. The kernel $N$ of the quotient map $SU(J_1, \Ok)\rightarrow SU(J_1, \Ok/\langle \gamma^2 \rangle)$ has finite index in $SU(J_1,\Ok)$. 

Any element $B$ in the kernel has the form $B=Id+\gamma^2 A$ for some matrix $A\in M_m(\Ok)$. Inserting $Q^*J_2Q$ for $J_1$ into the equation $B^*J_1B=J_1$,  gives that that $QBQ^{-1}$ fixes the form $J_2$. Because $Q$ has coefficients over $K$,  $QBQ^{-1}$ has coefficients in $K$ and not necessarily in $\Ok$. However, since  $QBQ^{-1}=Id+(\gamma Q)A(\gamma Q^{-1})$, and both $A$ and $\gamma Q$ are integral, then $QBQ^{-1}$ is also integral. Thus $QBQ^{-1}\in SU(J_2,\Ok)$.

Since $SU(J_1,\Ok)$ is a lattice, and $N$ is a finite index subgroup, then $N$ is also a lattice in $SL(\mathbb{R})$ with finite covolume. Thus $QNQ^{-1}$ has finite covolume in $SL(\mathbb{R})$ and is therefore a lattice. So $QNQ^{-1}$ is a sublattice of $SU(J_2,\Ok)$ and must have finite index by Marguls' theorem for lattices.

This shows that $N$ is a finite index subgroup $SU(J_1,\Ok)$ and $Q N Q^{-1}$ is finite index in $SU(J_2,\Ok)$.  \end{proof}

So how does this lattice information apply to the Jones representations? Firstly, after a rescaling and reparameterization, the Jones representations can be made to have determinant $\pm 1$, allowing the image to land in a $PSU(J,\phi , \Ok)$ instead of just $U(J,\phi , \Ok)$. Secondly, an arithmetic group theory result of Harish-Chandra, that is formalized in our setting in Chapter 6 of Witte \cite{DWM}, states that $SU_m(J,\phi , \Ok)$ is a lattice in $SL_m(\mathbb{R})$ under the exact Salem number circumstances as required by Theorem 1.1. So Corollary \ref{Jonesdiscrete} can be restated using this new vocabulary.

\begin{cor}
For each irreducible Jones representation, after a change of parameter,  there are infinitely many Salem numbers $s$ so that specializing $q$ to a powers of $s$ maps into a lattice in $PSL_m(\mathbb{R})$.\end{cor}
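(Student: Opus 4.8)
The plan is to combine the discreteness already obtained in Corollary~\ref{Jonesdiscrete} with the determinant normalization described above and the arithmeticity criterion of Harish-Chandra recorded in Chapter~6 of \cite{DWM}. Fix an irreducible Jones representation $\rho$ of dimension $m$. First I would carry out the change of parameter. Because each braid generator $\sigma_i$ acts with only the eigenvalues $q$ and $-1$, the determinant $\det\rho(\sigma_i)=(-1)^{b}q^{a}$ is the same for every $i$, where $a$ and $b=m-a$ are the multiplicities of $q$ and $-1$. Rescaling $\rho(\sigma_i)\mapsto c^{-1}\rho(\sigma_i)$ with $c^{m}=(-1)^{b}q^{a}$ forces the determinant to be $1$, and after the substitution $q=u^{m}$ the scalar $c$ becomes a monomial in $u$ up to sign. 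Thus the rescaled representation is well defined over $\Ok$ projectively and maps into $PSU_m(J,\phi,\Ok)$, where $J=J_q$ is the positive definite sesquilinear form supplied by Theorem~\ref{jonesunitary}.

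Next I would select the Salem specialization exactly as in the proof of Theorem~\ref{thmdiscrep}. By Theorem~\ref{jonesunitary}, $J_q$ is positive definite for $q$ in a neighborhood $\eta$ of $1$, and by Lemma~\ref{posreal} there are infinitely many Salem numbers $s$ together with infinitely many powers $s^{k}$ all of whose complex Galois conjugates lie in $\eta$. Specializing the parameter to such an $s^{k}$, every complex place $\sigma$ of $K=\Q(s)$ carries $s^{k}$ into $\eta$, so $J^{\sigma}$ is positive definite and $\phi^{\sigma}$ is complex conjugation. These are precisely the hypotheses of Theorem~\ref{SUdiscrete}, which gives that $SU_m(J_{s^{k}},\phi,\Ok)$ is discrete.

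It remains to upgrade discreteness to the lattice property. Here I would appeal to the Harish-Chandra finiteness theorem in the form of Chapter~6 of \cite{DWM}. Under the Salem configuration the quadratic extension $K/L$ splits at the identity real place of $L$ (the two real embeddings $s\mapsto s$ and $s\mapsto \frac{1}{s}$) and is complex at every other archimedean place, where $\phi^{\sigma}$ is complex conjugation and $J^{\sigma}$ is positive definite. Consequently the $\R$-points of the associated $\Q$-group, obtained by restriction of scalars of the special unitary group from $L$ to $\Q$, decompose as one noncompact factor $SL_m(\R)$ together with compact factors $SU(m)$. The theorem then makes $SU_m(J_{s^{k}},\phi,\Ok)$ a lattice in $SL_m(\R)$; passing to the quotient $SL_m(\R)\to PSL_m(\R)$ yields a lattice in $PSL_m(\R)$ containing the image of the normalized representation.

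The main obstacle is the bookkeeping that links the two formalisms. One must verify that the determinant normalization keeps the rescaled representation integral after the change of parameter, so that its image really lands in $SU_m$ (equivalently $PSU_m$) and not a larger group, and one must confirm that the Salem circumstances of Theorem~\ref{thmdiscrep} translate into exactly the signature data --- a single noncompact archimedean factor $SL_m(\R)$ with all others compact --- required by the Harish-Chandra criterion in \cite{DWM}. Once these identifications are in place, finiteness of covolume is immediate from the cited theorem, and the corollary follows.
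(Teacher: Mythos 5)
Your proposal follows the same overall route as the paper's proof: normalize the determinant via the substitution $q=y^m$ together with a monomial rescaling of the generators, specialize at powers of Salem numbers whose complex Galois conjugates lie in $\eta$, obtain discreteness from Theorem \ref{SUdiscrete}, promote $SU_m(J_s,\phi,\Ok)$ to a lattice in $SL_m(\R)$ by the Harish-Chandra result recorded in Chapter 6 of \cite{DWM}, and then project to $PSL_m(\R)$. Your description of the archimedean data (one split real place contributing $SL_m(\R)$, all other places complex with positive definite form, hence compact factors) is exactly the right input for that citation.

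However, the item you set aside as ``bookkeeping'' is precisely where the paper does real work, and as written your first step has a gap. Your scalar $c$ satisfies $c^m=(-1)^b q^a$, so after the substitution $q=u^m$ it equals $u^a$ times an $m$-th root of $(-1)^b$; when $b$ is odd and $m$ is even (this occurs, e.g., for the two-dimensional representation of $B_3$, where $a=b=1$), that root of unity lies outside the real field $K=\Q(s)$, and the projective class of a generator is the class of $u^{-a}\rho(\sigma_i)$, a matrix of determinant $-1$. No scalar in $K$ can repair this: $\mu^m=-1$ has no solution in a real field when $m$ is even, so the class of the generator does not lie in $PSU_m(J,\phi,\Ok)$; indeed, for even $m$ a determinant $-1$ class does not even lie in $PSL_m(\R)$ inside $PGL_m(\R)$. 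The paper closes exactly this hole by restricting to the squared-braid subgroup $B_n^2$, a non-central normal subgroup of finite index whose images have square determinants and therefore rescale honestly into $SU_m(J_y,\mathbb{Z}[y^{\pm 1}])$; Theorem \ref{thmdiscrep} is then applied to this genuine linear representation, and finite-index arguments carry the lattice property over to $PSU_m(J_s,\Ok)$ in $PSL_m(\R)$. To complete your version you need the same device, or else you must enlarge the target to the image of the determinant-$\pm 1$ unitary group, which contains $PSU_m(J_s,\phi,\Ok)$ with finite index and is still a lattice, but which for even $m$ lives in $PGL_m(\R)$ rather than $PSL_m(\R)$.
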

\begin{proof}

Let $\rho_q$ be an irreducible Jones representation of dimension $m$. The images of the braid generators under $\rho_q$ have determinant $\pm q^k$  for some $k\in\mathbb{N}$. After a change of variable $q=y^m$ and scaling the generators by $\frac{1}{y^{m-k}}$, this adjusted representation $\tilde{\rho}_y$ maps into $PSU_m(J^y, \mathbb{Z}[y^{\pm 1}])$. 

The subgroup $B^2_n$ of squared braids is a non-central normal subgroup of $B_n$ of finite index. The restriction $\tilde{\rho}_y|$ maps $B^2_n$  into $SU_m(J_y,\mathbb{Z}[y^{\pm 1}])$, and by Theorem \ref{thmdiscrep}, there exists infinitely many Salem numbers $s$ so that the specialization $\rho_s|$ at $y=s$ is discrete. Further by the the result in \cite{DWM}, these specializations make $SU_m(J_s, \Ok)$ lattices in $SL_m(\mathbb{R})$. Finite index arguments imply  $PSU_m(J_s, \Ok)$ is a lattice in $PSL_m(\mathbb{R})$.

\end{proof}

Since the goal is to obtain commensurable lattices as images of our Jones representations, and it is more natural to think of lattices in $SL_m(\mathbb{R})$ instead of in $PSL_m(\mathbb{R})$,  one may simply pass to the finite index subgroup $B_n^2$ and continue to think only about lattices in $SL_m(\mathbb{R})$. To apply Proposition \ref{SUcomm} requires equivalent defining forms. In general, it is difficult to determine when two forms are equivalent. The following theorem  gives a complete classification of the sesquilinear forms in a very specific algebraic setting that applies to the Salem number field scenario.

\begin{thm}[Scharlau\cite{SCHAR}, Ch.10]\label{schar}  If $L$ is a global field and $K=L(\sqrt{\delta})$, sesquilinear forms over $K/L$ are classified by dimension, determinant class and the signatures for those orderings of $L$ for which $\delta$ is negative. \end{thm}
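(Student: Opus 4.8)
The plan is to reduce the classification to the Hasse--Minkowski theory of quadratic forms over the global field $L$, using the standard local-to-global machinery for Hermitian forms. Since the asserted invariants are dimension, determinant class, and archimedean signatures, the natural route has three stages: (i) classify sesquilinear (Hermitian) forms over each completion $L_v$, (ii) invoke a Hasse principle to glue the local data into a global form, and (iii) read off which combinations of local invariants survive as independent global invariants.

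First I would treat the local classification, fixing a place $v$ of $L$ and writing $\phi$ for the nontrivial automorphism of $K/L$. If $v$ splits in $K$ (in particular at every archimedean ordering with $\delta >_v 0$, where $K_v \cong L_v \times L_v$ with the swap involution), Morita equivalence collapses Hermitian forms to ordinary modules and the form is classified by its rank alone. If $v$ is inert or ramified and non-archimedean, local class field theory gives $L_v^*/N_{K_v/L_v}(K_v^*) \cong \Z/2$, and over such a field a nondegenerate Hermitian form is classified by rank together with its discriminant in this order-two group, with no secondary Hasse-type invariant (in contrast to the quadratic case). Finally, at a real ordering $v$ with $\delta <_v 0$ one has $K_v \cong \C$ with complex conjugation, and Hermitian forms over $\C/\R$ are classified by signature. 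This already isolates the three families of invariants in the statement.

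The global step is the Hasse principle for Hermitian forms over $K/L$, i.e. Landherr's theorem: two forms of the same rank are isometric over $K$ iff they are isometric over every $K_v$, and a family of local forms of common rank is realized globally precisely when it satisfies the evident reciprocity constraint. Equivalently, one can pass through Jacobson's trace-form theorem, which embeds isometry classes of rank-$n$ Hermitian forms into isometry classes of $2n$-dimensional quadratic forms over $L$, and then apply Hasse--Minkowski; the image is cut out by the condition that the quadratic form be a trace form, which forces its Hasse invariants to be determined by the discriminant and pins the discriminant and signatures to the Hermitian data. Carrying out this identification shows that the local discriminants assemble into a single determinant class in $L^*/N_{K/L}(K^*)$, that the signatures at the orderings with $\delta <_v 0$ account for all archimedean information, and that a single product-formula (Hilbert-reciprocity) relation ties the parity of the signatures to the determinant class, so that dimension, determinant class, and those signatures form a complete and independent set of invariants.

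The main obstacle is the global arithmetic input: establishing the Hasse principle (Landherr's theorem), or equivalently controlling the image of Jacobson's trace-form embedding inside the Witt group of quadratic forms over $L$. The local classifications and the translation of invariants are essentially bookkeeping once the vanishing of the relevant obstruction over the global field is in hand; it is the local-global principle, together with the reciprocity relation among the local invariants, that does the real work and explains why no further invariants, such as local Hasse symbols or signatures at orderings with $\delta >_v 0$, appear.
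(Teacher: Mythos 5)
This theorem is imported by the paper directly from Scharlau (Ch.~10) with no proof given, so there is no in-paper argument to compare against; the relevant benchmark is the standard proof in that reference. Your outline --- local classification (split places give rank only, inert/ramified non-archimedean places give rank plus discriminant in $L_v^{\times}/N(K_v^{\times})\cong \Z/2\Z$, real orderings with $\delta<0$ give signature), then Landherr's Hasse principle together with the Hasse norm theorem for the cyclic extension $K/L$ to assemble the local discriminants into the single global determinant class --- is exactly that standard argument and is correct in substance.
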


This classification relies on the \textit{determinant class} which is defined here. Recall for a Salem number $s$ the following tower of fields.

\begin{center}
\begin{tikzpicture}
\matrix(a)[matrix of math nodes,
row sep=2em, column sep=3em,
text height=1.5ex, text depth=0.25ex]
{\Q(s)=K&K^\times&\\
 \Q(s+\frac{1}{s})=L&L^\times&(L^\times)^2\subseteq Norm(K^\times)\\
\Q\\};
\path(a-1-1)edge node[right]{2} (a-2-1);
\path(a-2-1) edge node[right]{} (a-3-1);
\path[-stealth](a-1-2) edge node[right]{$Norm$}(a-2-2);
\end{tikzpicture}
\end{center}

The Galois group of $K/L$ is generated by $\phi $ which maps $s\mapsto\frac{1}{s}$. There is a multiplicative group homomorphism $Norm:K^\times\rightarrow L^\times$ given by $Norm(\alpha)=\alpha\alpha^\phi $, where $K^\times=K-\{0\}$.  Notice for $\beta\in L$, $Norm(\beta)=\beta\beta^\phi =\beta^2$. So $(L^\times)^2\subseteq Norm(K)$.

\begin{defn} The \textbf{determinant class} of a sesquilinear form $H$ over $K/L$ is the coset of $\det(H)$ in $K^{\times}/Norm(K^\times)$,
$$[\det(H)]= \det(H)Norm(K) .$$
\end{defn}


Taking $\delta=(s-\frac{1}{s})^2$, $K$ can be rewritten as $K=L(\sqrt{\delta})$. Thus we can restate Scharlau's classification in the specific context of Salem numbers. 

\begin{thm}[Scharlau restated]\label{restate}
 Sesquilinear forms over $K/L$ are classified by dimension, determinant class and the signatures for those orderings of $L$ for which $(s-\frac{1}{s})^2$ is negative. \end{thm}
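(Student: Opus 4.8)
The plan is to derive Theorem \ref{restate} as a direct translation of Theorem \ref{schar} (Scharlau's classification), so the work is entirely in matching up the abstract hypotheses of the general statement with the concrete data of the Salem number field tower. First I would verify that $L = \Q(s+\frac{1}{s})$ is a global field: it is a finite extension of $\Q$, hence a number field, which is precisely the case of a global field that we need. Next I would confirm that $K = L(\sqrt{\delta})$ for the specified $\delta = (s-\frac{1}{s})^2$. Since $s$ satisfies $X^2 - (s+\frac{1}{s})X + 1 = 0$ over $L$, the quadratic formula gives $s - \frac{1}{s} = \sqrt{(s+\frac{1}{s})^2 - 4} \in K$, and this element is not in $L$ (as $s \notin L$ because $[K:L]=2$). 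Therefore $\delta = (s-\frac{1}{s})^2 \in L$, and $\sqrt{\delta} = s - \frac{1}{s}$ generates $K$ over $L$, so indeed $K = L(\sqrt{\delta})$ with $\delta \in L$.

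Having matched the field-theoretic setup, the remaining content is purely a substitution: Theorem \ref{schar} asserts that sesquilinear forms over $K/L$ are classified by three invariants --- dimension, determinant class, and the signatures at those orderings of $L$ for which $\delta$ is negative. I would simply substitute $\delta = (s-\frac{1}{s})^2$ into the third clause, which yields exactly the statement that the relevant signatures are those at orderings of $L$ for which $(s-\frac{1}{s})^2$ is negative. The notions of dimension and determinant class are unchanged, and the latter has already been spelled out in the intervening \textbf{Definition} as the coset $[\det(H)] = \det(H)\,Norm(K^\times)$ in $K^\times/Norm(K^\times)$, which is the standard determinant-class invariant appearing in Scharlau's theorem.

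The only step that warrants genuine attention, rather than routine verification, is confirming that $\delta = (s-\frac{1}{s})^2$ is a legitimate choice of defining element for $K/L$ in the sense required by Scharlau, namely that the classification is insensitive to the choice of square-root generator. Replacing $\sqrt{\delta}$ by $c\sqrt{\delta}$ for $c \in L^\times$ multiplies $\delta$ by $c^2$, which does not change the set of orderings at which $\delta$ is negative (since $c^2 > 0$ in every ordering of the totally real field $L$); hence the third invariant is well defined independently of the particular generator, and our specific choice $\delta = (s-\frac{1}{s})^2$ is as good as any. The hard part, to the extent there is one, is thus merely this well-definedness check; once it is in hand the theorem follows by direct substitution with no further computation.
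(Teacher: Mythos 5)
Your proposal is correct and takes essentially the same route as the paper, which proves Theorem \ref{restate} by the one-line observation that setting $\delta = (s-\frac{1}{s})^2$ rewrites $K$ as $L(\sqrt{\delta})$, so that Theorem \ref{schar} applies verbatim. Your extra verifications --- that $L$ is a global field, that $s-\frac{1}{s}$ indeed generates $K$ over $L$, and that scaling $\delta$ by a square does not change the set of orderings at which it is negative --- merely fill in details the paper leaves implicit.
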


In odd dimensions, it is very simple to show that \textit{all} sesquilinear forms have the same determinant class, up to scaling. However, for even dimensions, the situation is very unclear.

\begin{prop}\label{samedet}
For every odd dimensional invertible sesquilinear matrices $H$ and $J$ over $K$,   $[\det(H)]=[\det(\lambda J)]$ for $\lambda\in L$.
\end{prop}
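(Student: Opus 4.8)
The plan is to show that the determinant of any invertible sesquilinear form over $K$ is, up to scaling by an element of $L$, already sesquilinear in the sense of lying in $\text{Fix}(\phi)=L$, and then to exploit the odd dimension to absorb the relevant scalar into a norm. First I would recall that the defining condition $J^*=J$ means $(J^\phi)^\intercal=J$, so taking determinants gives $\det(J)^\phi=\det(J^\phi)=\det((J^\phi)^\intercal)=\det(J)$. Hence $\det(J)$ is fixed by $\phi$, which means $\det(J)\in L$ for every invertible sesquilinear $J$; the same holds for $H$. So both determinants already live in $L^\times$.

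The key point is then to compare the determinant classes $[\det(H)]$ and $[\det(J)]$ inside $K^\times/\text{Norm}(K^\times)$, allowing the freedom to rescale $J$ by $\lambda\in L$. Rescaling an $m\times m$ matrix multiplies its determinant by $\lambda^m$, so I want to choose $\lambda\in L^\times$ with $\det(\lambda J)=\lambda^m\det(J)$ lying in the same coset as $\det(H)$. Since both $\det(H)$ and $\det(J)$ are in $L^\times$, their ratio $r=\det(H)/\det(J)$ is in $L^\times$, and I need $\lambda^m r\in\text{Norm}(K^\times)$. Here is where odd dimension enters: write $m=2\ell+1$. Because $\text{Norm}(\beta)=\beta^2$ for $\beta\in L$, we have $(L^\times)^2\subseteq\text{Norm}(K^\times)$ as noted in the excerpt, so modulo norms the map $\lambda\mapsto\lambda^m$ on $L^\times/((L^\times)\cap\text{Norm}(K^\times))$ behaves like $\lambda\mapsto\lambda^{m}\equiv\lambda^{m \bmod 2}=\lambda$ (since $\lambda^{2\ell}$ is a square, hence a norm). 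Choosing $\lambda=r^{-1}=\det(J)/\det(H)\in L^\times$ then gives $\lambda^m r\equiv\lambda r=1\pmod{\text{Norm}(K^\times)}$, so $[\det(\lambda J)]=[\det(H)]$.

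To make the last step precise I would argue as follows: for $\lambda\in L^\times$ and odd $m=2\ell+1$, $\lambda^m=\lambda\cdot(\lambda^\ell)^2=\lambda\cdot\text{Norm}(\lambda^\ell)$, so $\lambda^m$ and $\lambda$ differ by a norm. Taking $\lambda=\det(J)^{-1}\det(H)$ we get
\[
\det(\lambda J)=\lambda^m\det(J)\equiv\lambda\det(J)=\det(H)\pmod{\text{Norm}(K^\times)},
\]
which is exactly the claim $[\det(\lambda J)]=[\det(H)]$. I expect the main obstacle, and the reason the statement restricts to odd dimension, to be precisely the parity argument in this final step: the identity $\lambda^m\equiv\lambda$ modulo norms only holds because $m$ is odd, so that $\lambda^{m-1}=\lambda^{2\ell}=(\lambda^\ell)^2$ is visibly a norm. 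In even dimension one instead gets $\lambda^m\equiv 1$, which cannot in general cancel an arbitrary $r\in L^\times/\text{Norm}(K^\times)$, and this is why the excerpt flags the even-dimensional case as unclear. Everything else is routine: the fact that $\det(J)\in L$ is immediate from sesquilinearity, and the coset manipulation is elementary once the parity observation is in hand.
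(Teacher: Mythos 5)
Your proposal is correct and takes essentially the same route as the paper's proof: the same choice of scalar $\lambda=\det(H)/\det(J)$, and the same parity trick writing $\lambda^{2\ell+1}=\lambda\cdot(\lambda^{\ell})^{2}$ so that the even part is absorbed into $(L^\times)^2\subseteq Norm(K^\times)$. The only cosmetic difference is that you deduce $\det(J)\in L$ directly by taking determinants of the relation $J^*=J$, whereas the paper invokes diagonalizability of Hermitian forms with diagonal entries fixed by $\phi$; both observations serve the same purpose.
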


\begin{proof}
Let $H$ and $J$ be sesquilinear matrices over $K$ of dimension $2k+1$. Hermitian guarantees both $H$ and $J$ are diagonalizable with diagonal entrees fixed by $\phi $. So, the determinant of both $H$ and $J$ are elements in $L$. Let $d_H$ and $d_J$ denote the nonzero determinants of both $H$ and $J$. Thus 
$$d_H=\frac{d_H}{d_J}d_J\stackrel{mod (L^\times)^2}{\equiv}(\frac{d_H}{d_J})^{2k+1} d_J= \det(\frac{d_H}{d_J}J).$$ Since $(L^\times)^2\subseteq Norm(K)$, then $H$ and $\lambda J$ have the same determinant class for $\lambda=\frac{d_H}{d_J}\in L$.
\end{proof}

As a result, to determine whether two forms of the same odd dimension are equivalent, it suffices only to check that they have the same signatures.




\begin{thm}
For $J_t$ a sesquilinear form that is positive definite for $t$ in a neighborhood $\eta$ of 1, there are infinitely many Salem numbers $s$ and integers $n,m$, so that in all odd dimensions, $SU_{2k+1}(J_{s^n},\phi ,\Ok)$ and $SU_{2k+1}(J_{s^m},\phi ,\Ok)$ are commensurable, discrete groups.
\end{thm}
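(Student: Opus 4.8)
The plan is to fix a single Salem number $s$ and to realize \emph{both} specializations inside one set of arithmetic data: $K=\Q(s)$, $L=\Q(s+\tfrac1s)$, the ring $\Ok$, and $\phi:s\mapsto\tfrac1s$. For any integer $r$ the matrix $J_{s^r}$ has entries in $\Z[s^r,s^{-r}]\subseteq\Ok$, and since $\phi$ sends $s^r\mapsto s^{-r}$, condition (2) of Theorem \ref{thmdiscrep} gives $J_{s^r}^{*}=(J_{s^{-r}})^{\intercal}=J_{s^r}$; thus every $J_{s^r}$ is a sesquilinear form over $K/L$ and the rings of integers coincide for \emph{all} exponents, so there is no need for any field equality $\Q(s^r)=\Q(s)$. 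I will then pick two exponents $n\neq m$ and verify the three invariants of the restated Scharlau classification (Theorem \ref{restate}), after which commensurability is immediate from Proposition \ref{SUcomm}.

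First I would secure discreteness and the lattice property for two exponents simultaneously. By Lemma \ref{posreal} there are infinitely many integers $r$ for which every complex Galois conjugate of $s^r$ lies in $\eta$; fix two distinct such exponents $n,m$. For a complex place $\sigma$ of $K$, given by $s\mapsto z$ with $|z|=1$, one has $J_{s^r}^{\sigma}=J_{z^r}$, and $z^r$ is a complex conjugate of $s^r$, hence lies in $\eta$, so $J_{z^r}$ is positive definite. Since $\phi^{\sigma}$ is complex conjugation on a Salem field (Section \ref{sec:Salem}), Theorem \ref{SUdiscrete} makes $U_{2k+1}(J_{s^r},\phi,\Ok)$ discrete, and the Harish--Chandra criterion in the form of \cite{DWM} makes $SU_{2k+1}(J_{s^r},\phi,\Ok)$ a lattice in $SL_{2k+1}(\R)$, for $r=n$ and $r=m$.

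Next I would prove equivalence of the two forms over $K$ through Theorem \ref{restate}. The dimensions agree, and in odd dimension $2k+1$ Proposition \ref{samedet} gives $[\det J_{s^n}]=[\det(\lambda J_{s^m})]$ for $\lambda=\det(J_{s^n})/\det(J_{s^m})\in L$, so the determinant classes agree after this scaling. For the signatures I would first locate the relevant orderings: writing $\delta=(s-\tfrac1s)^2=(s+\tfrac1s)^2-4\in L$, an ordering of $L$ induced by the real place of $K$ sends $\delta$ to $(s+\tfrac1s)^2-4>0$, while one induced by a complex place $s\mapsto z=e^{i\theta}$ sends $\delta$ to $(2\cos\theta)^2-4=-4\sin^2\theta<0$; hence the orderings of $L$ on which $\delta$ is negative are exactly those coming from the complex places of $K$. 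At each such ordering both $J_{s^n}$ and $J_{s^m}$ are positive definite by the previous paragraph, so each has signature $(2k+1,0)$, and $\lambda$ there specializes to a quotient of determinants of positive definite matrices, hence to a positive number, so $\lambda J_{s^m}$ also has signature $(2k+1,0)$. All three Scharlau invariants of $J_{s^n}$ and $\lambda J_{s^m}$ therefore coincide, giving $Q\in GL_{2k+1}(K)$ with $Q^{*}J_{s^n}Q=\lambda J_{s^m}$; that is, $J_{s^n}$ and $J_{s^m}$ are equivalent over $K$ in the sense of Definition \ref{equivform}. Proposition \ref{SUcomm} then yields commensurability of the two lattices, which are discrete by construction, and running the whole argument over the infinitely many Salem numbers supplied by Lemma \ref{posreal} delivers the infinitude claimed.

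I do not expect an essential obstacle beyond bookkeeping: the decisive observation is that the same positivity condition on the complex conjugates that forces discreteness also pins every relevant signature to $(2k+1,0)$, so the two forms become indistinguishable by the Scharlau invariants once odd dimension removes the determinant-class discrepancy via Proposition \ref{samedet}. The only point demanding real care is the sign of $\lambda$ at the negative orderings, which is precisely what the positive definite specializations control. In even dimension this strategy collapses exactly because Proposition \ref{samedet} is unavailable, which is why the statement is confined to odd dimensions.
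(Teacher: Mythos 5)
Your proof is correct and follows essentially the same route as the paper's: Lemma \ref{posreal} to place all complex Galois conjugates of $s^n$ and $s^m$ in $\eta$, Proposition \ref{samedet} together with Scharlau's classification (Theorem \ref{restate}) to get equivalence of the two forms after scaling by $\lambda=\det(J_{s^n})/\det(J_{s^m})$, and Proposition \ref{SUcomm} plus Theorem \ref{SUdiscrete} for commensurability and discreteness. You are in fact slightly more careful than the paper at two points --- identifying explicitly that the orderings of $L$ on which $\delta=(s-\frac{1}{s})^2$ is negative are exactly those lying under the complex places of $K$, and verifying the lattice hypothesis of Proposition \ref{SUcomm} via the Harish--Chandra result cited as \cite{DWM} --- but these are refinements of the same argument, not a different one.
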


\begin{proof}

  By Lemma \ref{posreal} there are infinitely many Salem numbers $s$ and integers $n,m$ so that  every complex Galois conjugate of $s^m$ and $s^n$ lie in $\eta$. Fix one such Salem number $s$, and $K,L,$ and $\delta$ as above.
  
  By Theorem \ref{schar}, sesquilinear forms are completely classified by dimension, determinant class, and the signatures for the places of $L$ for which $(s-\frac{1}{s})^2$ is negative. By Proposition \ref{samedet}, $J_{s^n}$ and $\lambda J_{s^m}$ have the same determinant class for $\lambda$ in $L$, namely $\lambda=\frac{\det J_{s^n}}{\det J_{s^m}}$.
 
 Let $\sigma$ be a complex placement of $L$. Then  $\sigma(s^m)$ is a complex Galois conjugates of $s^m$, and similarly for $\sigma(s^n)$ and $s^n$. Since $n$ and $m$ were chosen so that all of the complex Galois conjugate of $s^m$ and $s^n$ have arguments in $\eta$, then  $J_{\sigma(s^m)}$ and $J_{\sigma(s^n)}$ are positive definite. Moreover, $\frac{\det J_{s^n}}{\det J_{s^m}}$  and $\sigma(\frac{\det J_{s^n}}{\det J_{s^m}})$ are both positive, making $\lambda>0$. So regardless of whether $\sigma((s-\frac{1}{s})^2)$ is positive or negative, the forms $J_{\sigma(s^i)}$ have the same signature.
 
 Thus, $J_{s^n}$ is equivalent to $\lambda J_{s^m}$, and $SU(J_{s^n},\phi ,\Ok)$ is commensurable to $SU(J_{s^m},\phi ,\Ok)$. The groups are discrete by Theorem \ref{SUdiscrete}. 
 

\end{proof}

\begin{cor}\label{comgroups}
Let $\rho_t:G\rightarrow SL_{2k+1}(\mathbb{Z}[t,t^{-t}])$ be a group representation with a parameter $t$. Suppose there exists a matrix $J_t$ so that: 
\begin{enumerate}
\item for all $M$ in the image of $\rho_t$, $M^*J_tM=J_t$, where $M^*(t)=M^\intercal (\frac{1}{t})$,
\item $J_t=(J_{\frac{1}{t}})^\intercal $,
\item $J_t$ is positive definite for $t$ in an neighborhood $\eta$ of 1
\end{enumerate}
Then, there exists infinitely many Salem numbers $s$, so that for infinitely many integers $n,m$ the specializations $\rho_{s^m}$ at $t=s^m$ and $\rho_{s^n}$ at $t=s^n$ map into commensurable lattices of $SL_{2k+1}(\mathbb{R})$.
\end{cor}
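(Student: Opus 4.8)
The plan is to derive Corollary \ref{comgroups} as an essentially immediate consequence of the preceding unnamed theorem (the commensurability statement for $SU_{2k+1}(J_{s^n},\phi,\Ok)$ and $SU_{2k+1}(J_{s^m},\phi,\Ok)$) together with the lattice result of Harish-Chandra as formalized in \cite{DWM}. First I would observe that the hypotheses (1)--(3) of the corollary are verbatim the hypotheses of that theorem, so for the given form $J_t$ there are infinitely many Salem numbers $s$ and integers $n,m$ with every complex Galois conjugate of $s^n$ and $s^m$ lying in the neighborhood $\eta$, and such that $SU_{2k+1}(J_{s^n},\phi,\Ok)$ and $SU_{2k+1}(J_{s^m},\phi,\Ok)$ are commensurable discrete groups. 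The remaining work is to upgrade ``discrete group'' to ``lattice'' and to transport the conclusion from the abstract unitary groups to the images of the representation $\rho$.

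Next I would invoke the arithmetic input. By the result of \cite{DWM} (Harish-Chandra), under exactly the Salem-number hypothesis already in force --- namely that $J^\sigma$ is positive definite at every complex place $\sigma$ and $\phi^\sigma$ is complex conjugation --- the group $SU_{2k+1}(J_{s^i},\phi,\Ok)$ is a lattice in $SL_{2k+1}(\mathbb{R})$. So for $i \in \{n,m\}$ both groups are lattices, and the commensurability established in the theorem is commensurability \emph{as lattices} in $SL_{2k+1}(\mathbb{R})$. Since the representation has determinant $1$ by hypothesis (the target is $SL_{2k+1}$), the specialization $\rho_{s^i}$ lands inside $SU_{2k+1}(J_{s^i},\phi,\Ok)$: indeed $\rho_{s^i}(G) \subseteq GL_{2k+1}(\Ok)$ satisfies $M^*J_{s^i}M = J_{s^i}$ by hypothesis (1) specialized at $t = s^i$, and has determinant $1$, so it sits in the special unitary group.

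The conclusion then follows: $\rho_{s^n}(G)$ and $\rho_{s^m}(G)$ map into the lattices $SU_{2k+1}(J_{s^n},\phi,\Ok)$ and $SU_{2k+1}(J_{s^m},\phi,\Ok)$ respectively, and these two lattices are commensurable. I would state the proof compactly, citing the preceding theorem for the commensurability and discreteness, \cite{DWM} for the lattice property, and hypotheses (1)--(2) at $t = s^i$ together with $\det = 1$ to place the image inside the special unitary group.

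The step I expect to be the main (and in fact only) substantive point is verifying that the images actually land in $SU$ rather than merely $U$, which is where the odd-dimension/determinant-$1$ hypothesis is used --- everything else is a citation of the already-proved theorem and of \cite{DWM}. One subtlety worth a sentence is that ``map into commensurable lattices'' is a statement about the ambient lattices, not about the images themselves being commensurable, so I would phrase the conclusion carefully to match: the point is that $\rho_{s^n}$ and $\rho_{s^m}$ have images contained in lattices that are commensurable, which is exactly what the corollary asserts. No delicate estimate or new construction is required beyond assembling these ingredients.
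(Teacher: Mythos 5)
Your proposal is correct and follows exactly the route the paper intends: the paper states Corollary \ref{comgroups} without a separate proof, as an immediate consequence of the preceding theorem (commensurability and discreteness of $SU_{2k+1}(J_{s^n},\phi,\Ok)$ and $SU_{2k+1}(J_{s^m},\phi,\Ok)$) together with the Harish-Chandra lattice result cited from \cite{DWM}, plus the observation that the determinant-one images of $\rho_{s^n}$ and $\rho_{s^m}$ land inside those special unitary groups. Your write-up simply makes these assembly steps explicit, including the correct reading of ``map into commensurable lattices'' as a statement about the ambient lattices rather than the images.
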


\begin{example}
The reduced Burau representation of $B_4$ is 3 dimensional and, after the appropriate rescaling to have determinant 1,  satisfies Corollary \ref{comgroups}. So certain powers of the specializations in Example \ref{Burau4} map into commensurable lattices in $SL_3(\mathbb{R})$.
\end{example}

\bibliography{jonesrepisunitarybilbliography}{}
\bibliographystyle{plain}

\end{document}